\begin{document}
 \bibliographystyle{plain}

 \newtheorem{theorem}{Theorem}
 \newtheorem{lemma}{Lemma}
 \newtheorem{corollary}{Corollary}
 \newtheorem{problem}{Problem}
 \newtheorem{conjecture}{Conjecture}
 \newtheorem{definition}{Definition}
 \newtheorem*{MTP}{Mass Transference Principle}
 \newtheorem*{DS}{The Duffin-Schaeffer Theorem}
 \newtheorem*{conj}{Conjecture}
 \newcommand{\mc}{\mathcal}
 \newcommand{\rar}{\rightarrow}
 \newcommand{\Rar}{\Rightarrow}
 \newcommand{\lar}{\leftarrow}
 \newcommand{\lrar}{\leftrightarrow}
 \newcommand{\Lrar}{\Leftrightarrow}
 \newcommand{\A}{\mc{A}}
 \newcommand{\E}{\mc{E}}
 \newcommand{\HH}{\mc{H}}
 \newcommand{\C}{\mathbb{C}}
 \newcommand{\R}{\mathbb{R}}
 \newcommand{\N}{\mathbb{N}}
 \newcommand{\Q}{\mathbb{Q}}
 \newcommand{\Z}{\mathbb{Z}}
 \newcommand{\I}{\mathbb{I}}

\parskip=0.5ex

\title{Metric considerations concerning the mixed Littlewood Conjecture}

\author{Yann Bugeaud, Alan Haynes and  Sanju Velani}

\begin{abstract}
The main goal of this note is to develop a metrical theory of
Diophantine approximation within the framework  of the  de
Mathan-Teuli\'{e} Conjecture -- also known as the `Mixed Littlewood
Conjecture'. Let $p$ be a prime.
%The conjecture states that $\liminf_{n\rar\infty} n|n|_p\|n\alpha\|=0 $  for every real number $\alpha$.
A consequence of our main result is that, for almost every real
number $\alpha$
 $$ \liminf_{n\rar\infty}n (\log n)^2 |n|_p  \, \|n\alpha\|=0   \ . $$

\vspace*{3ex}

Mathematics Subject Classification 2000: 11J83, 11J25, 11K55, 11K60

\end{abstract}

\thanks{YB:~Research supported  by a Visiting Fellowship at the University of York.}

\thanks{AH:~Research supported by EPSRC grant EP/F027028/1.}

\thanks{SV:~Research supported by EPSRC grants EP/E061613/1 and EP/F027028/1.}

\maketitle

\section{Introduction}

The famous Littlewood Conjecture in the theory of simultaneous
Diophantine approximation  dates back to the 1930's and  asserts
that for every pair $(\alpha,\beta)$ of real numbers, we have that
\begin{equation} \label{little}
\liminf_{n\rar\infty}n\|n\alpha\|\|n\beta\|=0.
\end{equation}
Here and throughout, $\| \, . \, \| $      denotes the distance to
the nearest integer.    For background and recent `progress'
concerning  this fundamental  problem  see \cite{ekl,pvl}. However,
it is appropriate to  highlight the result of Einsiedler, Katok $\&$
Lindenstrauss  that states that the set of pairs $(\alpha,\beta)$
for which (\ref{little}) is not satisfied  is of zero Hausdorff
dimension; i.e. any exceptional set to the Littlewood Conjecture has
to be  of zero dimension.

 In 1962, Gallagher established a result which implies that if $\psi:\N\rar\R$ is a non-negative decreasing function, then for almost every  $(\alpha,\beta)$ the inequality
\begin{equation*}\label{metricLittlewood1}
\|n\alpha\|\|n\beta\|\le \psi (n)
\end{equation*}
has infinitely  (resp. finitely) many  solutions $n \in \N $   if
$\sum_{n\in\N}\psi (n)\log n $ diverges (resp. converges). In
particular, it follows that
\begin{equation} \label{littlebetter}
\liminf_{n\rar\infty}n \, (\log n)^2 \|n\alpha\|\|n\beta\|=0
\end{equation}
for almost every  pair $(\alpha,\beta)$ of real numbers. Thus from a
purely metrical point of view, Gallagher's result enables us to
`beat' Littlewood's assertion (\ref{little}) by a logarithm
squared.

The main goal of this note is to obtain a Gallagher type theorem
within the framework  of the recent de Mathan-Teuli\'{e} Conjecture
\cite{MathanTeulie} -- also known as the `Mixed Littlewood
Conjecture'.  In the following $p$ is a prime number and $| \, . \,
|_p$ is the usual $p$-adic norm.    B. de Mathan  and  O. Teuli\'{e}
conjectured that for every real number $\alpha$, we have that
\begin{equation}\label{mixedLitconj1}
\liminf_{n\rar\infty}n|n|_p\|n\alpha\|=0  \ .
\end{equation}
Various partial results exist  -- see
\cite{BugeaudDrmotaMathan,EinsiedlerKleinbock} and references
within.  Indeed, Einsiedler $\&$ Kleinbock  have shown that any
exceptional set to the  de Mathan-Teuli\'{e} Conjecture has to be of
zero  dimension.   Furthermore, let $p_1,\ldots , p_k$ be distinct
prime numbers. They also deduce, via a theorem of Furstenberg, that
if $k \geq 2$ then  for every real number $\alpha$
\begin{equation}\label{fursten}
\liminf_{n\rar\infty}  n |n|_{p_1}\cdots |n|_{p_k}\|n\alpha\|  =0  \
.
\end{equation}
This statement can be strengthened from a metrical point of view.
A consequence of our Gallagher type  theorem is that for $k \geq 1$,
\begin{equation}\label{furstenalmost}
\liminf_{n\rar\infty}  n \, (\log n)^{k+1} |n|_{p_1}\cdots
|n|_{p_k}\|n\alpha\|  =0  \
\end{equation}
for almost every  real number $\alpha$. Thus, just as with
Littlewood's conjecture, the metric statement  `beats' the de
Mathan--Teuli\'{e} assertion (\ref{mixedLitconj1}) by a logarithm
squared.

\begin{theorem}
\label{coz} Let $p_1,\ldots , p_k$ be  distinct prime numbers and
let $\psi:\N\rar\R$ be a non-negative decreasing function. Then, for
almost every real number $\alpha$ the inequality
\begin{equation*}
|n|_{p_1}\cdots |n|_{p_k}\|n\alpha\|\le \psi (n)
\end{equation*}
has infinitely (resp. finitely) many solutions $n \in \N$  if
\begin{equation*}
\sum_{n\in\N}(\log n)^k\psi (n)
\end{equation*}
diverges (resp. converges).
\end{theorem}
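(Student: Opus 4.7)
Rewrite the inequality as $\|n\alpha\| \le \Psi(n)$, where
$$\Psi(n) \;:=\; \frac{\psi(n)}{\prod_{i=1}^{k}|n|_{p_i}} \;=\; \psi(n)\prod_{i=1}^{k} p_i^{v_{p_i}(n)},$$
with $v_{p_i}(n)$ the $p_i$-adic valuation of $n$. Setting $A_n := \{\alpha \in [0,1) : \|n\alpha\| \le \Psi(n)\}$, one has $|A_n| \le 2\Psi(n)$, and the theorem becomes a statement about the Lebesgue measure of $\limsup_n A_n$.

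First I would show that $(\log n)^k\psi(n)$ really is the correct proxy for $\Psi(n)$, i.e.\ that the series $\sum_n \Psi(n)$ and $\sum_n (\log n)^k \psi(n)$ converge or diverge together. This reduces to the elementary estimate
$$\sum_{n \le N}\prod_{i=1}^k p_i^{v_{p_i}(n)} \;\asymp\; N(\log N)^k,$$
proved by grouping $n$ according to the tuple $(v_{p_1}(n),\ldots, v_{p_k}(n))$: each tuple with $N_{\mathbf{a}} := \prod p_i^{a_i} \le N$ contributes $\asymp N\prod_i(1-1/p_i)$, and the number of such tuples is $\asymp (\log N)^k$. Combined with Abel summation and the monotonicity of $\psi$, this gives the desired comparability. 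The convergence half of the theorem then follows at once from the first Borel--Cantelli lemma applied to the $A_n$.

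For the divergence half, a direct appeal to Khintchine's theorem is ruled out because $\Psi$ is far from monotone --- it is strongly inflated on $p_i$-smooth integers. My plan is therefore to apply a second-moment (Chung--Erd\H{o}s) form of the Borel--Cantelli lemma, followed by a Cassels/Gallagher zero-one law to upgrade positive measure to full measure. After reducing to the standard quasi-independence estimate
$$\sum_{m, n \le N}|A_m \cap A_n| \;\ll\; \Bigl(\sum_{n \le N}|A_n|\Bigr)^2,$$
I would write each $A_n$ as a union of $n$ arcs around the rationals $j/n$ and count pairs $(j,\ell)$ with $|m\ell - jn|$ small; since each admissible residue class modulo $\gcd(m, n)$ contains at most $\gcd(m, n)$ such pairs, this yields the standard bound
$$|A_m \cap A_n| \;\ll\; \Psi(m)\Psi(n) \;+\; \gcd(m, n)\min\!\Bigl(\frac{\Psi(m)}{m},\frac{\Psi(n)}{n}\Bigr).$$

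The first (independence) term sums without trouble to $O\bigl((\sum_{n\le N}|A_n|)^2\bigr)$. The main obstacle is the correlation term, where the inflation of $\Psi$ on $p_i$-smooth integers hurts: when $m$ and $n$ share a large $p_i$-smooth factor, the contribution is amplified. My plan is to control this by factoring $n = N_{\mathbf{a}}m'$ with $N_{\mathbf{a}} = \prod p_i^{a_i}$ and $\gcd(m', p_1\cdots p_k) = 1$, so that $\gcd(m, n)$ splits cleanly into its $p_i$-smooth part (handled explicitly by summation over the multi-indices $\mathbf{a},\mathbf{b}$) and its $p_i$-rough part (handled by classical gcd-sum estimates of the shape $\sum_{m',m''\le X}\gcd(m', m'')\ll X^2\log X$). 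Making these two reductions fit together, so that the full correlation sum is provably $O\bigl((\sum_{n\le N}\Psi(n))^2\bigr)$, is where I expect to spend most of the effort; this is the technical heart of the argument.
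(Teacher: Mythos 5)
Your convergence half and the comparability $\sum_n \Psi(n) \asymp \sum_n (\log n)^k\psi(n)$ are correct and match the paper (which records this as the $s=1$ case of its Lemma \ref{star}). The divergence half, however, takes a genuinely different and far more laborious route, and it has a real gap. The paper does not run a second-moment argument at all: it deduces Theorem \ref{coz} from Theorem \ref{monthm1}, whose divergence case is a short application of the \emph{Duffin--Schaeffer Theorem} to the reduced-fraction inequality $|n\alpha - a|\le \Psi(n)$, $(a,n)=1$. Verifying the Duffin--Schaeffer $\limsup$ hypothesis reduces to showing
\[
\sum_{n\le N}\frac{\varphi(n)}{n}\,\Psi(n)\;\gg\;\sum_{n\le N}\Psi(n),
\]
which the paper gets by splitting off the $p_i$-adic part of $n$, applying the mean-value estimate of Lemma \ref{psilem1} for $\sum_{n\le N,\,p_i\nmid n}\varphi(n)/n$, and one partial summation in which the monotonicity of $\psi$ is used. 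All of the hard overlap/correlation machinery is already packaged inside the black-box Duffin--Schaeffer Theorem.

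The gap in your plan is precisely at the correlation sum
\[
\sum_{m<n\le N}\gcd(m,n)\,\min\!\Bigl(\tfrac{\Psi(m)}{m},\tfrac{\Psi(n)}{n}\Bigr)\;\ll\;\Bigl(\sum_{n\le N}\Psi(n)\Bigr)^2,
\]
which you describe as ``where I expect to spend most of the effort'' but leave entirely open, and it is not merely laborious. You are working with \emph{unreduced} arcs $j/n$, and the paper itself points out (Section on removing monotonicity) that if $\psi$ is allowed to be non-monotone then the Duffin--Schaeffer counterexample can be adapted to make \eqref{divcond2} diverge while \eqref{monsols1} has only finitely many solutions a.e.\ --- i.e.\ the unreduced $\limsup$ set has measure zero. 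By the Cassels/Gallagher zero-one law you invoke, this forces the second-moment bound above to \emph{fail} for such $\psi$. Consequently any correct proof of this bound must use the monotonicity of $\psi$ inside the correlation estimate itself, not only in the initial comparability lemma --- but your sketch relies on generic tools (arc counting, classical gcd-sum bounds of the form $\sum_{m',m''\le X}\gcd(m',m'')\ll X^2\log X$) that are insensitive to monotonicity and, as stated, lose a $\log$ factor that is not obviously absorbed. Either pass to reduced fractions and re-derive the Duffin--Schaeffer machinery from scratch, or --- much more economically --- invoke the Duffin--Schaeffer Theorem directly as the paper does and devote the work to the $\varphi$-weighted comparability, which is a clean partial-summation argument.
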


 This Gallagher type theorem  will be deduced as a consequence of our main result.

\begin{theorem}\label{monthm1}
Let $p_1,\ldots , p_k$ be  distinct prime numbers and  $ f_1,\ldots
,f_k:\R\rar\R$ be positive functions.  Furthermore, let
$\psi:\N\rar\R$ be a non-negative decreasing function. Then, for
almost every real number $\alpha$ the inequality
\begin{equation}\label{monsols1}
f_1(|n|_{p_1})\cdots f_k(|n|_{p_k})\|n\alpha\|\le \psi (n)
\end{equation}
has infinitely (resp. finitely) many solutions $n \in \N$ if
\begin{equation}\label{divcond2}
\sum_{n\in\N}\frac{\psi (n)}{f_1(|n|_{p_1})\cdots f_k(|n|_{p_k})}
\end{equation}
diverges (resp. converges).
\end{theorem}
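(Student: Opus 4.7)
\emph{Plan.} I would split the theorem into the convergence and divergence halves. The convergence half is routine: setting $\delta_n = \psi(n)/\prod_{i=1}^k f_i(|n|_{p_i})$ and $E_n = \{\alpha \in [0,1] : \|n\alpha\| \le \delta_n\}$, one has $\mu(E_n) \le 2\delta_n$, so convergence of \eqref{divcond2} together with the first Borel--Cantelli lemma immediately forces $\mu(\limsup_n E_n) = 0$.

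For the divergence half, which is the substance of the theorem, the first structural move I would make is to exploit the $p$-adic profile of the weight by stratifying $\N$ according to the valuation vector $\mathbf{a} = (a_1,\dots,a_k) \in \Z_{\ge 0}^k$. With $P = p_1\cdots p_k$, every $n \in \N$ factorises uniquely as $n = \Pi(\mathbf{a})m$ with $\Pi(\mathbf{a}) = \prod_i p_i^{a_i}$ and $\gcd(m, P) = 1$. On this slice the product $\prod_i f_i(|n|_{p_i})$ is the constant $F_{\mathbf{a}} = \prod_i f_i(p_i^{-a_i})$, so the restricted approximation function $m \mapsto \psi(\Pi(\mathbf{a})m)/F_{\mathbf{a}}$ inherits the monotonicity of $\psi$. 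After the linear substitution $\beta = \Pi(\mathbf{a})\alpha$, the $\mathbf{a}$-slice becomes a Khintchine problem in which one seeks infinitely many $m$ coprime to $P$ with $\|m\beta\|$ below a monotonic threshold, so the Duffin--Schaeffer Theorem stated in the preamble applies directly. If the inner sum $\sum_{\gcd(m,P)=1}\psi(\Pi(\mathbf{a})m)/F_{\mathbf{a}}$ diverges for some particular $\mathbf{a}$, the theorem follows from that single slice.

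The harder situation is when every slice converges individually while the total sum in \eqref{divcond2} diverges only after summing over $\mathbf{a}$. Here I would invoke a standard zero--one law for $\limsup_n E_n$ (so the set has measure $0$ or $1$), reducing the task to producing positive measure, and then apply the Chung--Erd\H{o}s / Paley--Zygmund inequality to the sequence $(E_n)$. This requires an overlap estimate of the form
\[
\mu(E_n \cap E_m) \;\ll\; \mu(E_n)\mu(E_m) \;+\; (\text{correction depending on } \gcd(n,m)),
\]
summing over $m,n \le N$ to $O\bigl((\sum_{n\le N}\mu(E_n))^2\bigr)$. The main obstacle will be precisely this overlap estimate: although $\delta_n$ is monotonic within each stratum, the magnifying factor $F_{\mathbf{a}}^{-1}$ can vary dramatically from one stratum to another, so resonant intervals around $a/n$ and $b/m$ for $n,m$ drawn from different strata have to be controlled using the coprimality of $\Pi(\mathbf{a})$ and $\Pi(\mathbf{a}')$ modulo $P$. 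Once this bound is in place, Paley--Zygmund combined with the zero--one law promotes positive measure to full measure and completes the divergence half.
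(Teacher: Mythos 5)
Your treatment of the convergent half is fine; first Borel--Cantelli does it.

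The divergent half has two genuine gaps. First, the "easy" case is glossed over: you claim that on a single slice $\mathbf{a}$ with divergent sub-sum "the Duffin--Schaeffer Theorem stated in the preamble applies directly." It does not apply directly. The restricted approximating function on a slice vanishes at all $m$ with $\gcd(m,P)>1$, so it is not monotonic (Khintchine is off the table), and the Duffin--Schaeffer Theorem still requires you to verify its $\limsup$ hypothesis, namely that $\sum_{m\le N}\varphi(m)\Psi_{\mathbf{a}}(m)/m$ is comparable with $\sum_{m\le N}\Psi_{\mathbf{a}}(m)$. That verification is not automatic; it requires the mean-value estimate $\sum_{m\le N,(m,P)=1}\varphi(m)/m\gg N$ plus partial summation against the monotonicity of $\psi$ — which is precisely the content of Lemma~\ref{psilem1} and the partial-summation step in the paper. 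Second, and more seriously, the "hard" case you identify (every slice converges, total diverges) really does occur — take $k=1$, $f_1=$ identity, $\psi(n)=1/(n\log^2 n)$ — and then your decomposition gives nothing, since a point can lie in $W$ with its witnesses scattered across all slices without lying in any single slice set $S_{\mathbf{a}}$. Your remedy (a zero--one law together with a Chung--Erd\H{o}s/Paley--Zygmund second-moment argument) is a restatement of what would need to be proved rather than a proof: the quasi-independence overlap bound for the reduced-fraction intervals is exactly the hard content of the Duffin--Schaeffer Theorem itself, and you flag it as "the main obstacle" without offering a route past it.

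The paper sidesteps all of this by not decomposing the $\limsup$ set at all. One applies the Duffin--Schaeffer Theorem \emph{once}, globally, to $\Psi(n)=\psi(n)/\prod_i f_i(|n|_{p_i})$, and the entire substance of the proof is the verification of the D--S $\limsup$ hypothesis for this $\Psi$. The valuation-vector stratification you introduce does appear there, but as a bookkeeping device to estimate the ratio $\bigl(\sum_{n\le N}\varphi(n)\Psi(n)/n\bigr)\big/\bigl(\sum_{n\le N}\Psi(n)\bigr)$ from below — reducing, within each stratum, to the fact that $\varphi(n)/n\gg_k 1$ on average over $n$ coprime to $p_1\cdots p_k$ (Lemma~\ref{psilem1}) combined with partial summation against the monotone $\psi$. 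If you redirect your stratification toward verifying that global condition instead of trying to isolate a divergent slice, your plan collapses onto the paper's and the "no slice diverges" obstruction disappears.
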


\vspace*{3ex}

The final section of  the paper is devoted to  discussing  various
related metrical results and open problems.

\section{Theorem \ref{monthm1}  $\Rightarrow$    Theorem \ref{coz}}\label{proofcoz}

With reference to Theorem \ref{monthm1}, let each of the functions $
f_1,\ldots ,f_k $ be the identity function. Then, Theorem  \ref{coz}
trivially follows from Theorem \ref{monthm1}  if we can show that
$$
\sum_{n\in\N}(\log n)^k\psi (n) = \infty   \quad
\Longleftrightarrow \quad \sum_{n\in\N}\frac{\psi (n)}{ |n|_{p_1}
\cdots  |n|_{p_k}} = \infty \, .
$$
Actually we will prove the following more general lemma, which will
also be needed in Section \ref{HT}.
\begin{lemma}\label{star}
Suppose that $s\in [0,1]$ and that $p_1,\ldots ,p_k$ are distinct
primes. Furthermore, suppose that  $\psi:\N\rar\R$ is a non-negative
decreasing function.   If $s<1$ then the sum
\begin{equation}\label{divcond5}
\sum_{n\in\N}n\left(\frac{\psi(n)}{n|n|_{p_1}\cdots
|n|_{p_k}}\right)^s
\end{equation}
diverges if and only if
\begin{equation*}
\sum_{n\in\N}n^{1-s}\psi(n)^s
\end{equation*}
diverges. If $s=1$, then the divergence of (\ref{divcond5}) is
equivalent to that of
\begin{equation*}
\sum_{n\in\N}(\log n)^k\psi(n).
\end{equation*}
\end{lemma}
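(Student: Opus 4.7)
\textbf{Proof plan for Lemma \ref{star}.}

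The plan is to reformulate the sum in (\ref{divcond5}) using the $p_i$-adic valuations of $n$, estimate the resulting arithmetic factor on dyadic blocks, and then compare with the target sums via a Cauchy-condensation style argument. Writing $|n|_{p_i} = p_i^{-v_i(n)}$ with $v_i(n)$ the $p_i$-adic valuation, and setting $Q(n) := p_1^{v_1(n)}\cdots p_k^{v_k(n)}$, the sum in (\ref{divcond5}) rewrites as $T_s = \sum_{n \in \N} n^{1-s} \psi(n)^s Q(n)^s$. The question thus reduces to controlling how $Q(n)^s$ is distributed on dyadic blocks $I_j := [2^j, 2^{j+1})$.

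The key estimate is the partial-sum asymptotic
\[
S_s(N) \; := \; \sum_{n\leq N} Q(n)^s \; \asymp \; \begin{cases} N & \text{if } s<1, \\ N\,(\log N)^k & \text{if } s=1. \end{cases}
\]
For $s<1$ this follows by grouping $n$ by its valuations: the bound $|\{n\leq N : v_i(n)=a_i\ \forall i\}|\leq N / \prod p_i^{a_i}$ together with the convergence of the geometric product $\prod_i \sum_{a\geq 0} p_i^{(s-1)a}$ gives $S_s(N) \leq C_s N$, while the bound $Q(n)^s\geq 1$ is trivial. For $s=1$ a standard multiplicative argument---decomposing $n = P^{\mathbf a} m$ with $P=p_1\cdots p_k$, $P^{\mathbf a} = \prod p_i^{a_i}$, and $\gcd(m,P)=1$---yields main term $\tfrac{\phi(P)}{P} N \cdot |\{\mathbf a : P^{\mathbf a} \leq N\}|$, and the simplex count $|\{\mathbf a : P^{\mathbf a}\leq N\}|\asymp (\log N)^k$ produces the critical logarithmic factor.

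Splitting $T_s$ into dyadic blocks $I_j$ and using monotonicity of $\psi$ to bracket $\psi(n)^s$ between $\psi(2^{j+1})^s$ and $\psi(2^j)^s$ gives, up to constants depending only on $s$,
\[
\sum_{n\in I_j} n^{1-s}\psi(n)^s Q(n)^s \; \asymp \; 2^{j(1-s)}\psi(2^j)^s\,\bigl(S_s(2^{j+1})-S_s(2^j)\bigr),
\]
which by the previous step equals $\asymp 2^{j(2-s)}\psi(2^j)^s$ when $s<1$ and $\asymp 2^j j^k \psi(2^j)$ when $s=1$. Applying the identical dyadic analysis to the comparison sums $\sum_n n^{1-s}\psi(n)^s$ and $\sum_n (\log n)^k\psi(n)$ produces the same dyadic expressions, so the convergence equivalences follow (the minor mismatch between $\psi(2^j)$ and $\psi(2^{j+1})$ in upper vs.\ lower bounds is absorbed by an index shift). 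The only non-routine step, and hence the main obstacle, is the $s=1$ estimate $S_1(N)\asymp N(\log N)^k$: this is precisely where the $(\log n)^k$ weight in the lemma originates, via the lattice-point count in the simplex $\sum_i a_i\log p_i\leq \log N$. For $s<1$ the geometric decay $p_i^{s-1}<1$ renders $Q(n)^s$ bounded on average, so the equivalence in that regime is essentially automatic.
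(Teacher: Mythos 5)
Your proposal is correct in outline but takes a genuinely different route from the paper. The paper's proof decomposes $n = p_1^{a_1}\cdots p_k^{a_k}m$ with $(m,N)=1$, handles the coprimality condition by M\"obius inversion, and then applies Abel (partial) summation in $\ell$ to exploit the monotonicity of $\psi$; the two regimes $s<1$ and $s=1$ are then extracted from a rather intricate explicit evaluation of the resulting double sums, including a lattice-point count $\#\{\mathbf a: p_1^{a_1}\cdots p_k^{a_k}\le\ell\}\asymp(\log\ell)^k$. Your approach instead isolates a clean intermediate quantity $S_s(N)=\sum_{n\le N}Q(n)^s$, proves $S_s(N)\asymp N$ for $s<1$ and $\asymp N(\log N)^k$ for $s=1$, and then runs a dyadic condensation: bracket $n^{1-s}$ and $\psi(n)^s$ on each block $I_j=[2^j,2^{j+1})$, sum $Q(n)^s$ over the block, and absorb the $\psi(2^j)$ versus $\psi(2^{j+1})$ mismatch by an index shift. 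This has two advantages over the paper's argument: the two directions of the ``iff'' come out simultaneously from the upper and lower brackets, and the whole calculation is decoupled into one arithmetic estimate ($S_s$) and one elementary condensation step. Interestingly, the acknowledgement in the paper records a referee simplification (replacing $(m,N)=1$ by $m\equiv 1\pmod N$ to avoid M\"obius inversion) which is a step in the same spirit of de-cluttering the arithmetic input, though still inside the paper's partial-summation framework rather than the condensation framework you use.

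There is one genuine (though easily repaired) gap. For $s=1$ you write that the block total $S_1(2^{j+1})-S_1(2^j)$ is $\asymp 2^j j^k$ ``by the previous step'', i.e.\ by $S_1(N)\asymp N(\log N)^k$. An order-of-magnitude estimate for a partial sum does not by itself yield a \emph{lower} bound on the difference of consecutive partial sums: a priori, the $(\log N)^k$-growth could be concentrated on a sparse set of dyadic scales. You must either upgrade $\asymp$ to an asymptotic $S_1(N)\sim cN(\log N)^k$ with an error $O(N(\log N)^{k-1})$ --- which your multiplicative decomposition does in fact give, since the error term is $\ll 2^k\sum_{P^{\mathbf a}\le N}P^{\mathbf a}\ll N(\log N)^{k-1}$ --- or estimate $\sum_{n\in I_j}Q(n)$ directly on the block: for each $\mathbf a$ with $P^{\mathbf a}\le c\,2^j$ there are $\gg 2^j/P^{\mathbf a}$ integers $n\in I_j$ with exact valuations $\mathbf a$, each contributing $P^{\mathbf a}$, so $\sum_{n\in I_j}Q(n)\gg 2^j\cdot\#\{\mathbf a:P^{\mathbf a}\le c\,2^j\}\gg 2^j j^k$. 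For $s<1$ there is no such issue, as the lower bound $\sum_{n\in I_j}Q(n)^s\ge\#I_j=2^j$ is trivial and the upper bound $S_s(N)\le N\prod_i(1-p_i^{s-1})^{-1}$ is immediate. With this one step tightened, your argument is a valid and arguably more transparent proof of the lemma.
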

\begin{proof}
For the duration of the proof let us write $N:=p_1\cdots p_k$. For
one direction of the proof we begin by using the monotonicity of
$\psi$ to deduce that
\begin{align}
\sum_{n\in\N}n\left(\frac{\psi(n)}{n|n|_{p_1}\cdots |n|_{p_k}}\right)^s&=\sum_{a_1,\ldots ,a_k\ge 0}\sum_{\substack{m\in\N\\(m,N)=1}}m^{1-s}p_1^{a_1}\cdots p_k^{a_k}\psi(p_1^{a_1}\cdots p_k^{a_k}m)^s\label{refcomment}\\
&\ge\sum_{a_1,\ldots ,a_k\ge
0}\sum_{\substack{m\in\N\\(m,N)=1}}m^{1-s}\sum_{\ell=p_1^{a_1}\cdots
p_k^{a_k}m}^{p_1^{a_1}\cdots p_k^{a_k}(m+1)-1}\psi(\ell)^s.\nonumber
\end{align}
Upon interchanging the orders of summation it is apparent that the
latter quantity is equal to
\begin{align*}
\sum_{\ell\in\N}\psi(\ell)^s\sum_{\substack{m\in\N\\(m,N)=1}}\sum_{\substack{a_1,\ldots
,a_k\ge 0\\\ell/(m+1)<p_1^{a_1}\cdots p_k^{a_k}\le\ell/m}}m^{1-s}.
\end{align*}
Next by M\"{o}bius inversion and partial summation this becomes
\begin{align}
&\sum_{\ell\in\N}\psi(\ell)^s\sum_{d|N}\mu (d)d^{1-s}\sum_{m\in\N}\sum_{\substack{a_1,\ldots ,a_k\ge 0\\\ell/(md+1)<p_1^{a_1}\cdots p_k^{a_k}\le\ell/md}} \!\!\!\!\!\!\!  m^{1-s}\nonumber\\[2ex]
&\qquad =\sum_{\ell\in\N}\psi(\ell)^s\sum_{a_1,\ldots ,a_k\ge 0}\sum_{d|N}\mu (d)d^{1-s}\sum_{\substack{m\in\N\\\ell/(dp_1^{a_1}\cdots p_k^{a_k})-1/d<m\le \ell/(dp_1^{a_1}\cdots p_k^{a_k})}}  \!\!\!\!\!\!\!\!\!\!\!\!\!\!\!\!\!\!\!\!\!  m^{1-s}\nonumber\\[2ex]
&\qquad
=\lim_{L\rar\infty}\left(\sum_{\ell\le L}\left(\psi(\ell)^s-\psi(\ell+1)^s\right)\sum_{j=1}^\ell\sum_{a_1,\ldots
,a_k\ge 0} \sum_{d|N}\mu (d)d^{1-s}    \!\!\!\!\!\!\!\!\!\!\!\!\!\!
\sum_{\substack{m\in\N\\j/(dp_1^{a_1}\cdots p_k^{a_k})-1/d<m\le
j/(dp_1^{a_1}\cdots p_k^{a_k})}}  \!\!\!\!\!\!\!
\!\!\!\!\!\!\!\!\!\!\!\!\!\! m^{1-s}\right.\nonumber\\[1ex]
&\qquad\qquad\qquad\left.+ \ \psi(L+1)^s\sum_{j=1}^L\sum_{a_1,\ldots
,a_k\ge 0} \sum_{d|N}\mu (d)d^{1-s}    \!\!\!\!\!\!\!\!\!\!\!\!\!\!
\sum_{\substack{m\in\N\\j/(dp_1^{a_1}\cdots p_k^{a_k})-1/d<m\le
j/(dp_1^{a_1}\cdots p_k^{a_k})}}  \!\!\!\!\!\!\!
\!\!\!\!\!\!\!\!\!\!\!\!\!\! m^{1-s}\qquad\right).\label{partialsum1}
\end{align}
Now we focus on the sums
\begin{align}
\sum_{j=1}^\ell\sum_{\substack{m\in\N\\j/(dp_1^{a_1}\cdots
p_k^{a_k})-1/d<m\le j/(dp_1^{a_1}\cdots p_k^{a_k})}}
\!\!\!\!\!\!\!\!\!\!\!\!\!\! m^{1-s} \, .\label{partialsum1.4}
\end{align}
If we write each $j$ in the first sum as $j=idp_1^{a_1}\cdots p_k^{a_k}+r$ with $0\le i\le \ell/dp_1^{a_1}\cdots p_k^{a_k}$ and $0\le r<dp_1^{a_1}\cdots p_k^{a_k}$ then the sum over $m$ is either $i^{1-s}$ or $0$, depending on whether or not $r<p_1^{a_1}\cdots p_k^{a_k}$. Thus (\ref{partialsum1.4}) is equal to
\begin{align}
&p_1^{a_1}\cdots p_k^{a_k}\sum_{1\le i<\lfloor\ell/dp_1^{a_1}\cdots p_k^{a_k}\rfloor}i^{1-s}\nonumber\\
&\qquad+\min\left\{p_1^{a_1}\cdots p_k^{a_k},1+\ell-\left\lfloor\frac{\ell}{dp_1^{a_1}\cdots p_k^{a_k}}\right\rfloor dp_1^{a_1}\cdots p_k^{a_k}\right\}\cdot\left\lfloor\frac{\ell}{dp_1^{a_1}\cdots p_k^{a_k}}\right\rfloor^{1-s}.\label{partialsum1.5}
\end{align}

Here we break our analysis into two cases. If $s=1$ then
(\ref{partialsum1.5}) equals
\[\ell/d+O(p_1^{a_1}\cdots p_k^{a_k}),\]
and returning to
(\ref{partialsum1}) we find that it is
\begin{align*}
=\lim_{L\rar\infty}&\left(\sum_{\ell\le L}\left(\psi(\ell)-\psi(\ell+1)\right)\sum_{\substack{a_1,\ldots ,a_k\ge 0\\p_1^{a_1}\cdots p_k^{a_k}\le \ell}}\left(\frac{\varphi (N)\ell}{N}+O\left(\sum_{d|N}|\mu (d)|~p_1^{a_1}\cdots p_k^{a_k}\right)\right)\right.\\[2ex]
&\qquad\left.+\psi(L+1)\sum_{\substack{a_1,\ldots ,a_k\ge 0\\p_1^{a_1}\cdots p_k^{a_k}\le L}}\left(\frac{\varphi (N)L}{N}+O\left(\sum_{d|N}|\mu (d)|~p_1^{a_1}\cdots p_k^{a_k}\right)\right)\right).\nonumber
\end{align*}
Now note that (since $N:=p_1 \cdots p_k$ is fixed) the error terms in the inner sums are
\begin{align*}
\ll \sum_{\substack{a_1,\ldots ,a_k\ge 0\\p_1^{a_1}\cdots p_k^{a_k}\le \ell}}p_1^{a_1}\cdots p_k^{a_k}\ll \ell(\log\ell)^{k-1}.
\end{align*}
This inequality can easily be verified by induction on $k$ and we emphasize that the implied constant is dependent only on $N$. For the main terms we note that
\begin{align*}
\sum_{\substack{a_1,\ldots ,a_k\ge 0\\p_1^{a_1}\cdots p_k^{a_k}\le \ell}}1\gg(\log\ell)^k,
\end{align*}
and thus (\ref{partialsum1}) is
\begin{align*}
\gg\lim_{L\rar\infty}\left(\sum_{\ell\le L}\left(\psi(\ell)-\psi(\ell+1)\right)\ell(\log \ell)^k+\psi(L+1)(\log L)^k\right)\gg\sum_{n\in\N}(\log n)^k\psi (n).
\end{align*}
For the case when $0<s<1$ we have that
\begin{align*}
p_1^{a_1}\cdots p_k^{a_k}\sum_{1\le i\le \ell/(dp_1^{a_1}\cdots p_k^{a_k})}i^{1-s}=\frac{\ell^{2-s}}{(2-s)d^{2-s}\left(p_1^{a_1}\cdots
p_k^{a_k}\right)^{1-s}}+O\left(\left(\frac{\ell}{d}\right)^{1-s}(p_1^{a_1}\cdots p_k^{a_k})^s\right).
\end{align*}
It follows from this, (\ref{partialsum1.4}), and (\ref{partialsum1.5}) that
\begin{align}
&\sum_{j=1}^\ell\sum_{a_1,\ldots ,a_k\ge 0}\sum_{d|N}\mu (d)d^{1-s}\sum_{\substack{m\in\N\\j/(dp_1^{a_1}\cdots p_k^{a_k})-1/d<m\le j/(dp_1^{a_1}\cdots p_k^{a_k})}}m^{1-s}\nonumber\\[2ex]
&\qquad =\sum_{\substack{a_1,\ldots ,a_k\ge 0\\p_1^{a_1}\cdots
p_k^{a_k}\le \ell}} \ \sum_{d|N}\mu (d)  \, d^{1-s} \ \sum_{j=1}^\ell\sum_{\substack{m\in\N\\j/(dp_1^{a_1}\cdots p_k^{a_k})-1/d<m\le j/(dp_1^{a_1}\cdots p_k^{a_k})}}m^{1-s}\nonumber\\[2ex]
&\qquad =\sum_{\substack{a_1,\ldots ,a_k\ge 0\\p_1^{a_1}\cdots
p_k^{a_k}\le \ell}}\left(\frac{\varphi (N)  \, \ell^{2-s}}{(2-s)N(p_1^{a_1}\cdots
p_k^{a_k})^{1-s}}+O\left(2^k\ell^{1-s}(p_1^{a_1}\cdots p_k^{a_k})^s\right)\right).\label{partialsum2}
\end{align}
For the error term here we have the trivial upper bound
\begin{align*}
\sum_{\substack{a_1,\ldots ,a_k\ge 0\\p_1^{a_1}\cdots p_k^{a_k}\le
\ell}}\ell^{1-s}(p_1^{a_1}\cdots p_k^{a_k})^s\ll\ell(\log
\ell)^k.
\end{align*}
This shows that the quantity in (\ref{partialsum2}) is
bounded below by a positive constant (which depends on $N$ and $s$)
times $\ell^{2-s}$, at least for $\ell$ larger than some fixed bound. Returning to (\ref{partialsum1}) again we have
that it is
\begin{align*}
\gg\lim_{L\rar\infty}\left(\sum_{\ell\le L}\left(\psi(\ell)^s-\psi(\ell+1)^s\right)
\ell^{2-s}+\psi (L+1)^sL^{2-s}\right)\gg\sum_{n\in\N}n^{1-s}\psi (n)^s.
\end{align*}
This proves one direction of the lemma. For the other direction we
start from the observation that
\begin{align*}
\sum_{n\in\N}n\left(\frac{\psi(n)}{n|n|_{p_1}\cdots
|n|_{p_k}}\right)^s&\ll\sum_{a_1,\ldots ,a_k\ge 0}\sum_{\substack{m\in\N\\(m,N)=1}}m^{1-s}\frac{p_1^{a_1}\cdots p_k^{a_k}}{2}\psi(p_1^{a_1}\cdots p_k^{a_k}m)^s\\
&\ll \sum_{a_1,\ldots ,a_k\ge
0}\sum_{\substack{m\in\N\\(m,N)=1}}m^{1-s}\sum_{p_1^{a_1}\cdots p_k^{a_k}(m-1/2)<\ell \le p_1^{a_1}\cdots p_k^{a_k}m}\psi(\ell)^s.
\end{align*}
Since we are aiming for an upper bound this time we can drop the
condition $(m,N)=1$, and this makes things a little simpler than
before. Our bound then becomes
\begin{align*}
& \sum_{\ell\in\N}\psi(\ell)^s\sum_{0\le a_1,\ldots ,a_k\le \max_i(\log_{p_i}2\ell)}\sum_{\substack{m\in\N\\\ell/p_1^{a_1}\cdots p_k^{a_k}\le m<\ell/p_1^{a_1}\cdots p_k^{a_k}+1/2}}m^{1-s}\\
&\qquad\ll\sum_{\ell\in\N}\ell^{1-s}\psi(\ell)^s\sum_{0\le
a_1,\ldots ,a_k\le \max_i(\log_{p_i}2\ell)}(p_1^{a_1}\cdots
p_k^{a_k})^{s-1}.
\end{align*}
When $s=1$ this shows that
\begin{align*}
\sum_{n\in\N}\frac{\psi(n)}{|n|_{p_1}\cdots |n|_{p_k}}\ll
\sum_{n\in\N}(\log n)^k\psi (n),
\end{align*}
and when $s<1$ we have that
\begin{align*}
\sum_{n\in\N}n\left(\frac{\psi(n)}{n|n|_{p_1}\cdots
|n|_{p_k}}\right)^s\ll \sum_{n\in\N}n^{1-s}\psi (n)^s.
\end{align*}
This completes the proof of the lemma.
\end{proof}

\section{Proof of main result}\label{proofsec1}

The proof of the convergent case of Theorem \ref{monthm1}  is an
easy  application of the Borel-Cantelli Lemma from probability
theory.
%This is both straightforward  and standard and is therefore omitted.
Without loss of generality we can restrict our attention to real
numbers $\alpha$ lying within the unit interval $\I:=[0,1]$.   It
follows that we need to  determine the Lebesgue measure $| \; . \;
|$ of
$$
%W({\bf p}, {\bf f}, \psi)
  \limsup_{n \in \N }   A _n           \qquad {\rm where}  \qquad
A_n :=   \{ \alpha \in \I :  {\eqref{monsols1} }  {\rm \ holds \, }
\}   \ .
$$
It is easily seen that  if  \eqref{divcond2} converges then so does
$\sum_{n\in \N}  | A_n | $  and the Borel-Cantelli Lemma implies
that the associated $\limsup$  set
 is of zero measure.   Note that in proving the  convergent case we do not require the function $\psi$  to be monotonic.

\medskip

%In view of the above, it is fair to say that

The divergent case  constitutes the main substance of Theorem
\ref{monthm1} and will be established  as an application of the
Duffin-Schaeffer Theorem.

\subsection{Preliminaries}

The following is a consequence of an attempt by R. Duffin  and A.
Schaeffer to remove the monotonicity assumption from Khintchine's
fundamental `zero-one' law -- see \cite[Theorem 1]{DuffinSchaeffer}
and  \cite[Chapter 2]{HarmanMNT}  for background, proof and further
details.

\begin{DS}
 Let $\psi:\N\rar\R$ be a non-negative  function. Then, for almost every real number $\alpha$ the inequality
\begin{equation*}
|n \alpha- a   |\le \  \psi (n)    \qquad (a,n)=1
\end{equation*}
 has infinitely  many solutions  $(a,n) \in \Z  \times \N $ if
 \begin{equation}\label{ds1}
\sum_{n\in\N}  \psi (n)  = \infty  \quad { and} \quad
\limsup_{N\rar\infty}\left(\sum_{n\le N} \frac{\varphi (n)\psi
(n)}{n}\right) \left(\sum_{n\le N}  \psi (n) \right)^{-1}>0.
\end{equation}
%and
%\begin{equation}\label{ds2}
% \limsup_{N\rar\infty}\left(\sum_{n\le N} \frac{\varphi (n)\psi (n)}{n}\right) \left(\sum_{n\le N}  \psi (n) \right)^{-1}>0.
% \end{equation}
\end{DS}

\vspace{2ex}

Here and throughout, $\varphi$ is the Euler phi function.  Just for
completeness, we mention that the famous  and  open Duffin-Schaeffer
Conjecture corresponds to the above statement with \eqref{ds1}
replaced by the single `natural' condition that $\sum  (\varphi
(n)\psi (n))/n $ diverges.

In the course of establishing the divergent case of Theorem
\ref{monthm1}, it will be useful to have the following elementary
fact at hand.
\begin{lemma}\label{psilem1}
Let  $p_1,\ldots , p_k$ be  distinct prime numbers and  $N\in\N$.
Then
\begin{equation*}
\sum_{\substack{n\le N\\p_1,\ldots , p_k\nmid n}}\frac{\varphi
(n)}{n}=\frac{6N}{\pi^2}\prod_{i=1}^k
\frac{p_i}{p_i+1}+O_k\left(\log N\right).
\end{equation*}
\end{lemma}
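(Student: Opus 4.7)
\smallskip

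The plan is a standard sieve-theoretic computation starting from the identity $\varphi(n)/n = \sum_{d \mid n} \mu(d)/d$. Substituting and swapping the order of summation,
\[
\sum_{\substack{n \le N \\ p_1,\ldots,p_k \nmid n}} \frac{\varphi(n)}{n}
= \sum_{\substack{d \le N \\ p_1,\ldots,p_k \nmid d}} \frac{\mu(d)}{d}
\sum_{\substack{m \le N/d \\ p_1,\ldots,p_k \nmid m}} 1,
\]
where I have used that, since the $p_i$ are prime, the condition $p_i \nmid dm$ is equivalent to $p_i \nmid d$ and $p_i \nmid m$.

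Next, by a standard inclusion--exclusion with $N := p_1\cdots p_k$ fixed,
\[
\sum_{\substack{m \le X \\ p_1,\ldots,p_k \nmid m}} 1
= X\prod_{i=1}^{k}\!\left(1 - \tfrac{1}{p_i}\right) + O(2^k),
\]
with the implied constant absolute. Applying this with $X = N/d$ and substituting above splits the sum into a main term
\[
N\prod_{i=1}^{k}\!\left(1 - \tfrac{1}{p_i}\right)
\sum_{\substack{d \le N \\ p_1,\ldots,p_k \nmid d}} \frac{\mu(d)}{d^{2}}
\]
and an error term bounded by $2^{k}\sum_{d \le N} 1/d = O_k(\log N)$, which is the desired error.

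For the main term I would complete the sum over $d$ to infinity, using the crude bound $\sum_{d > N} 1/d^2 = O(1/N)$ to show that the tail contributes $O(1)$. The resulting Euler product evaluates as
\[
\sum_{\substack{d=1 \\ p_1,\ldots,p_k \nmid d}}^{\infty} \frac{\mu(d)}{d^{2}}
= \frac{1}{\zeta(2)} \prod_{i=1}^{k}\!\left(1 - \tfrac{1}{p_i^{2}}\right)^{-1}
= \frac{6}{\pi^{2}} \prod_{i=1}^{k} \frac{p_i^{2}}{p_i^{2}-1},
\]
and combining with the factor $\prod_i (1 - 1/p_i)$ collapses neatly to $\frac{6}{\pi^{2}} \prod_i \frac{p_i}{p_i+1}$, giving the claimed asymptotic.

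There is no real obstacle here: the calculation is routine, and the only point that requires minor care is the extension of the $d$-sum to infinity, which must be controlled without disturbing the $O_k(\log N)$ error. The implicit constants in $O(2^k)$ and in the tail bound depend only on $k$ (through $N = p_1\cdots p_k$), consistent with the $O_k$ notation in the statement.
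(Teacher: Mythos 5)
Your proof is correct and follows essentially the same route as the paper's: expand $\varphi(n)/n=\sum_{d\mid n}\mu(d)/d$, swap the order of summation, count the integers in $[1,N/d]$ coprime to $p_1\cdots p_k$ by inclusion--exclusion with error $O(2^k)$, and complete the resulting $\sum \mu(d)/d^2$ to an Euler product. The only blemish is a notational collision where you introduce ``$N:=p_1\cdots p_k$'' even though $N$ is already the upper summation limit; rename that quantity and the argument reads cleanly.
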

\begin{proof}
By well known properties of  $\varphi$ and the M\"{o}bius function
$\mu$ we have that
\begin{align}
\sum_{\substack{n\le N\\p_1,\ldots , p_k\nmid n}}\frac{\varphi (n)}{n}&=\sum_{\substack{n\le N\\p_1,\ldots , p_k\nmid n}}\sum_{d|n}\frac{\mu (d)}{d}   \  = \ \sum_{\substack{d\le N\\p_1,\ldots , p_k\nmid d}}\frac{\mu (d)}{d}\sum_{\substack{e\le N/d\\p_1, \ldots , p_k\nmid e}}1\nonumber\\[1ex]
&=\sum_{\substack{d\le N\\p_1,\ldots , p_k\nmid d}}\frac{\mu (d)}{d}\sum_{f|p_1\cdots p_k}\mu (f)\left(\frac{N}{fd}+O(1)\right)\nonumber\\[1ex]
&=N\left(\prod_{i=1}^k\frac{\varphi
(p_i)}{p_i}\right)\sum_{\substack{d\le N\\p_1,\ldots , p_k\nmid
d}}\frac{\mu (d)}{d^2}+O\left(2^k\sum_{\substack{d\le N\\p_1,\ldots
, p_k\nmid d}}\frac{|\mu (d)|}{d}\right).\label{phieqn1}
\end{align}
The Euler product formula for the Riemann zeta function gives us
that
\begin{equation*}
\sum_{\substack{d\le N\\p_1,\ldots , p_k\nmid d}}\frac{\mu
(d)}{d^2}=\zeta^{-1} (2)\prod_{i=1}^k(1-p_i^{-2})^{-1}+O(N^{-1})  \,
.
\end{equation*}
Combining this with (\ref{phieqn1}) completes the proof.
\end{proof}

\subsection{Proof of  divergent case of Theorem \ref{monthm1}  }

We will show that in the divergence case of Theorem \ref{monthm1},
the inequality
 \begin{equation}\label{nonmonsols1}
%f_1(|n|_{p_1})\cdots f_k(|n|_{p_k})   \  |n\alpha- a |\le   \psi (n)    \   \quad (a,n)=1
|n\alpha- a|\le \frac{\psi (n)}{ f_1(|n|_{p_1})\cdots
f_k(|n|_{p_k})}   \  \qquad (a,n)=1
\end{equation}
 has infinitely many solutions $(a,n) \in \Z  \times \N $ for almost every real number $\alpha$.  This clearly implies that (\ref{monsols1}) has infinitely many solutions for almost every $\alpha$ and thereby completes the proof of Theorem \ref{monthm1}.

 It is easy to see that the Duffin-Schaeffer Theorem will guarantee infinitely many solutions to \eqref{nonmonsols1} for almost  every $\alpha$ if, in addition to the divergence of (\ref{divcond2}), we have that
\begin{equation}\label{DSthmhyp}
\limsup_{N\rar\infty}\left(\sum_{n\le N} \frac{\varphi (n)\psi
(n)}{nf_1(|n|_{p_1})\cdots f_k(|n|_{p_k})}\right)\left(\sum_{n\le
N}\frac{\psi (n)}{f_1(|n|_{p_1})\cdots
f_k(|n|_{p_k})}\right)^{-1}>0.
\end{equation}

\noindent With this in mind, note that

\begin{align*}
&\sum_{n\le N} \frac{\varphi (n)\psi (n)}{nf_1(|n|_{p_1})\cdots f_k(|n|_{p_k})}\\[1ex]
&\qquad\qquad=\sum_{a_1=0}^{\lfloor\log_{p_1}N\rfloor}\cdots\sum_{a_k=0}^{\lfloor\log_{p_k}N\rfloor}\sum_{\substack{n\le N/(p_1^{a_1}\cdots p_k^{a_k})\\p_1, \ldots , p_k\nmid n}}\frac{\varphi (p_1^{a_1}\cdots p_k^{a_k}n)\psi (p_1^{a_1}\cdots p_k^{a_k}n)}{p_1^{a_1}\cdots p_k^{a_k}nf_1(p_1^{-a_1})\cdots f_k(p_k^{-a_k})}\\[2ex]
&\qquad\qquad=\sum_{a_1=0}^{\lfloor\log_{p_1}N\rfloor}\cdots\sum_{a_k=0}^{\lfloor\log_{p_k}N\rfloor}\left(\prod_{i=1}^k\frac{\varphi
(p_i)}{p_if_i(p_i^{-a_i})}\right)\sum_{\substack{n\le
N/(p_1^{a_1}\cdots p_k^{a_k})\\p_1, \ldots , p_k\nmid
n}}\frac{\varphi (n)\psi (p_1^{a_1}\cdots p_k^{a_k}n)}{n}.
\end{align*}
To deal with the inner sum we will use partial summation. First
write the collection of integers coprime to $p_1\cdots p_k$ in
increasing order as $n_1<n_2<\cdots$. Then for any function
$\psi':\N\rar\R$ and for any $M>1$ we have that
\begin{align*}
\sum_{i\le M}\frac{\varphi (n_i)\psi' (n_i)}{n_i}=&\sum_{i\le M}(\psi' (n_i)-\psi' (n_{i+1}))\sum_{j=1}^i\frac{\varphi (n_j)}{n_j}\\
&+\psi' (n_{M+1})\sum_{j=1}^M\frac{\varphi (n_j)}{n_j}.
\end{align*}
It is easy to check that Lemma \ref{psilem1} implies that
\begin{equation*}
\sum_{j=1}^i\frac{\varphi (n_j)}{n_j}\gg_k i
\end{equation*}
and if $\psi'$ is non-negative and monotonic then we can use this
fact in the inner sums of the partial summation to obtain
\begin{align*}
\sum_{i\le M}\frac{\varphi (n_i)\psi' (n_i)}{n_i}\gg_k \sum_{i\le
M}i(\psi' (n_i)-\psi' (n_{i+1}))+M\psi' (n_{M+1})=\sum_{i\le
M}\psi'(n_i).
\end{align*}
Since the implied constant here depends at most on $k$ when we
return to our above analysis we find that
\begin{align*}
&\sum_{n\le N} \frac{\varphi (n)\psi (n)}{nf_1(|n|_{p_1})\cdots f_k(|n|_{p_k})}\\
&\qquad\qquad\gg_k
\sum_{a_1=0}^{\lfloor\log_{p_1}N\rfloor}\cdots\sum_{a_k=0}^{\lfloor\log_{p_k}N\rfloor}\left(\prod_{i=1}^k\frac{\varphi (p_i)}{p_if_i(p_i^{-a_i})}\right)\sum_{\substack{n\le N/(p_1^{a_1}\cdots p_k^{a_k})\\p_1, \ldots , p_k\nmid n}}\psi (p_1^{a_1}\cdots p_k^{a_k}n)\\
&\qquad\qquad\gg_k\sum_{a_1=0}^{\lfloor\log_{p_1}N\rfloor}\cdots\sum_{a_k=0}^{\lfloor\log_{p_k}N\rfloor}\sum_{\substack{n\le N/(p_1^{a_1}\cdots p_k^{a_k})\\p_1, \ldots , p_k\nmid n}}\frac{\psi (p_1^{a_1}\cdots p_k^{a_k}n)}{f_1(p_1^{-a_1})\cdots f_k(p_k^{-a_k})}\\
&\qquad\qquad=\sum_{n\le N} \frac{\psi (n)}{f_1(|n|_{p_1})\cdots
f_k(|n|_{p_k})}  \ .
\end{align*}
This shows that hypothesis (\ref{DSthmhyp}) is satisfied and as
desired the conclusion of our theorem now follows from the
Duffin-Schaeffer Theorem.

\section{Related results and open problems}

\subsection{The Hausdorff theory}\label{HT}  For $ s > 0$, let $ {\mc H}^s (X) $ denote the $s$-dimensional Hausdorff measure of a set $X  \subseteq \R$ and let $\dim X$ denote its Hausdorff dimension.  The Mass Transference Principle \cite{MTP} allows us to deduce the following Hausdorff measure generalization of Theorem \ref{monthm1}.

\begin{theorem}\label{monthm1hs}
Let $p_1,\ldots , p_k$ be  distinct prime numbers and  $ f_1,\ldots
,f_k:\R\rar\R$ be positive functions.  Furthermore, let
$\psi:\N\rar\R$ be a non-negative decreasing function and let
$W(\psi, \mathbf{p}, \mathbf{f})$ denote the set of real numbers  in
the unit interval $\I :=  [0,1]$ for which inequality
(\ref{monsols1}) has infinitely many solutions. Then, for any $0<s
\leq 1$
\begin{equation*}
{\mc H}^s \big(W(\psi, \mathbf{p}, \mathbf{f}) \big)  =\left\{
\begin{array}{rl}
0 & { if} \;\;\;
\displaystyle \sum_{n\in\N}  \, n \;  \Big(\frac{\psi (n)}{  n \, f_1(|n|_{p_1})\cdots f_k(|n|_{p_k})} \Big)^s<\infty\,\\[4ex]
{\mc H}^s(\I) & { if} \;\;\; \displaystyle \sum_{n\in\N} \, n \;
\Big(\frac{\psi (n)}{ n \, f_1(|n|_{p_1})\cdots f_k(|n|_{p_k})}
\Big)^s=\infty\,
\end{array}
\right..
\end{equation*}
\end{theorem}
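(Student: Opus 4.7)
My plan is to separate the two halves. Writing $\Psi(n):=\psi(n)/(nf_1(|n|_{p_1})\cdots f_k(|n|_{p_k}))$, note that $W(\psi,\mathbf{p},\mathbf{f})$ is the $\limsup$ of the balls $B(a/n,\Psi(n))$ taken over $0\le a\le n$, $n\in\N$. The convergence case is a routine Hausdorff--Cantelli covering: cover $W$ by $\bigcup_{n\ge N}\bigcup_{a=0}^n B(a/n,\Psi(n))$ for arbitrary $N$; the level-$n$ contribution to the $s$-dimensional Hausdorff premeasure is $\ll n\,\Psi(n)^s$, and the hypothesis (which also forces $\Psi(n)\to 0$ and makes the tail vanish) yields $\HH^s(W)=0$. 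No monotonicity of $\psi$ is needed here.

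For the divergence case I would invoke the Mass Transference Principle applied to the family $\{B(a/n,\Psi(n))\}$ in $\R$. It reduces the sought equality $\HH^s(W)=\HH^s(\I)$ to the Lebesgue statement that the shrunken limsup set $\limsup B(a/n,\Psi(n)^s)$ has full one-dimensional measure; equivalently, that for almost every $\alpha\in\I$ the inequality
\[
|n\alpha-a|\le n\Psi(n)^s=\frac{n^{1-s}\psi(n)^s}{f_1(|n|_{p_1})^s\cdots f_k(|n|_{p_k})^s},\qquad (a,n)=1,
\]
has infinitely many solutions $(a,n)\in\Z\times\N$. When $s=1$ this is precisely the divergence half of Theorem \ref{monthm1}, so only the range $s<1$ requires additional work.

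For $s<1$ I would deduce this Lebesgue statement from the Duffin--Schaeffer Theorem applied to $\psi_{DS}(n):=n\Psi(n)^s$. The series $\sum_n\psi_{DS}(n)$ diverges by hypothesis, so only the ratio condition (\ref{ds1}) remains to be verified, and I expect this to be the main technical hurdle: $\psi_{DS}$ is \emph{not} monotonic in $n$ (the factor $n^{1-s}$ is increasing), so neither Theorem \ref{monthm1} nor the partial summation argument of Section \ref{proofsec1} can be quoted verbatim.

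My plan to sidestep this is to push the non-monotonicity into the $p$-adic factor. After the decomposition $n=p_1^{a_1}\cdots p_k^{a_k}m$ with $(m,p_1\cdots p_k)=1$, both sides of (\ref{ds1}) for $\psi_{DS}$ factor over $(a_1,\ldots,a_k)$, and what remains is a comparison $\sum\varphi(m)h(m)\gg_k\sum m\,h(m)$ along coprime residues, where $h(m):=m^{-s}\psi(p_1^{a_1}\cdots p_k^{a_k}m)^s$ is now \emph{genuinely} decreasing in $m$. A partial summation applied to Lemma \ref{psilem1} gives $\sum_{i\le j,\,(n_i,p_1\cdots p_k)=1}\varphi(n_i)\gg_k n_j^2$, matching the trivial $\sum_{i\le j,\,(n_i,p_1\cdots p_k)=1}n_i\asymp n_j^2$; a second Abel summation against the decreasing $h$ then delivers the comparison. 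Summing back over $(a_1,\ldots,a_k)$ reconstitutes (\ref{ds1}) for $\psi_{DS}$, the Duffin--Schaeffer Theorem completes the Lebesgue statement, and the MTP upgrades it to the asserted $\HH^s$-conclusion.
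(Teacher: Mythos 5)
Your plan is sound and follows the route the paper has in mind: reduce the $\HH^s$-divergence claim to a Lebesgue statement for the shrunken balls $B(a/n,\Psi(n)^s)$ via the Mass Transference Principle, and dispose of the convergence half by a routine Hausdorff--Cantelli covering. The paper itself offers no detail beyond citing the MTP, and you have correctly put your finger on the genuine technical point that this hides: for $0<s<1$ the shrunken limsup set corresponds to the approximating function $\psi_{DS}(n)=n^{1-s}\psi(n)^s/(f_1(|n|_{p_1})\cdots f_k(|n|_{p_k}))^s$, and the factor $n^{1-s}\psi(n)^s$ need not be decreasing (take $\psi(n)=n^{-1}$ and $s<1/2$), so Theorem~\ref{monthm1} cannot simply be quoted. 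Your fix is exactly the right modification of the argument of Section~\ref{proofsec1}: after the decomposition $n=p_1^{a_1}\cdots p_k^{a_k}m$ with $m$ coprime to $p_1\cdots p_k$, the $\mathbf{a}$-prefactors in numerator and denominator of the Duffin--Schaeffer ratio~(\ref{ds1}) differ only by the constant $\varphi(p_1^{a_1}\cdots p_k^{a_k})/(p_1^{a_1}\cdots p_k^{a_k})=\prod_i(1-p_i^{-1})$, so it suffices to show $\sum_m\varphi(m)h(m)\gg_k\sum_m m\,h(m)$ with $h(m)=m^{-s}\psi(p_1^{a_1}\cdots p_k^{a_k}m)^s$, which \emph{is} decreasing in $m$; the needed partial-sum bound $\sum_{j\le i}\varphi(n_j)\gg_k n_i^2\asymp\sum_{j\le i}n_j$ (for $n_j$ running over integers coprime to $p_1\cdots p_k$) follows from Lemma~\ref{psilem1} by one further Abel summation, or more simply by restricting to $n_i/2<n_j\le n_i$. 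In short, you have correctly rerun the divergence half of Theorem~\ref{monthm1} with $\psi$ replaced by $\psi_{DS}$, exploiting that the troublesome increasing factor $n^{1-s}$ is killed once one divides by $n$ in~(\ref{ds1}). The argument is complete and correct.
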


When  $s=1$, the measure  ${\mc H}^s$ coincides with one dimensional
Lebesgue measure  $| \; . \; |$ and the above theorem reduces to
Theorem \ref{monthm1}. In the case that each of the functions $
f_1,\ldots ,f_k $ is the identity function, let us write $W(\psi,
\mathbf{p})$ for $W(\psi, \mathbf{p}, \mathbf{f})$. The following
statement is a  consequence of Lemma \ref{star} and the fact that
${\mc H}^s(\I) = \infty $ when $ s < 1$.

\begin{theorem}
\label{cozhs} Let $p_1,\ldots , p_k$ be  distinct prime numbers and
let $\psi:\N\rar\R$ be a non-negative decreasing function. Then, for
any $0<s < 1$
\begin{equation*}
{\mc H}^s \big(W(\psi, \mathbf{p}) \big)  =\left\{
\begin{array}{rl}
0 & { if} \;\;\;
\displaystyle \sum_{n\in\N}  \, n^{1-s}  \;  \psi (n)^s<\infty\,\\[4ex]
\infty & { if} \;\;\; \displaystyle \sum_{n\in\N} \, n^{1-s} \;
\psi (n)^s=\infty\,
\end{array}
\right..
\end{equation*}
\end{theorem}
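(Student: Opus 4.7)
The plan is to derive Theorem \ref{cozhs} directly by combining Theorem \ref{monthm1hs} with Lemma \ref{star}, exploiting the special form of the series that appears in the Hausdorff dichotomy when each $f_i$ is the identity.

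First I would specialise Theorem \ref{monthm1hs} to the case $f_i(x) = x$ for $1 \leq i \leq k$. With this choice, $W(\psi, \mathbf{p}, \mathbf{f}) = W(\psi, \mathbf{p})$, and the convergence/divergence condition governing $\mc{H}^s(W(\psi, \mathbf{p}))$ becomes precisely the sum
\[
\sum_{n \in \N} n \left(\frac{\psi(n)}{n \, |n|_{p_1} \cdots |n|_{p_k}}\right)^s,
\]
which is exactly the series appearing in Lemma \ref{star}. Since $0 < s < 1$, Lemma \ref{star} tells us that this sum diverges if and only if $\sum_{n \in \N} n^{1-s} \psi(n)^s$ diverges. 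This equivalence immediately translates the dichotomy of Theorem \ref{monthm1hs} into the simpler dichotomy claimed in Theorem \ref{cozhs}.

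To complete the proof I would then handle the value of the measure in the divergent case. The divergent conclusion of Theorem \ref{monthm1hs} yields $\mc{H}^s(W(\psi, \mathbf{p})) = \mc{H}^s(\I)$, and since $s < 1$ the unit interval $\I$ has infinite $s$-dimensional Hausdorff measure, giving $\mc{H}^s(W(\psi, \mathbf{p})) = \infty$ as stated. The convergent case is immediate from the convergent case of Theorem \ref{monthm1hs} combined with the same equivalence from Lemma \ref{star}.

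There is no real obstacle in this argument, since both ingredients are already in place: Theorem \ref{monthm1hs} is established via the Mass Transference Principle, and Lemma \ref{star} provides exactly the required comparison of series. The only point deserving care is the verification that all hypotheses on $\psi$ in Lemma \ref{star} (monotonicity and non-negativity) match those imposed in Theorem \ref{cozhs}, which they do.
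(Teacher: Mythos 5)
Your proposal is correct and follows exactly the route the paper takes: specialise Theorem \ref{monthm1hs} to the identity functions, invoke the $s<1$ case of Lemma \ref{star} to replace the sum, and use $\mathcal{H}^s(\mathbb{I})=\infty$ for $s<1$ to turn $\mathcal{H}^s(\mathbb{I})$ into $\infty$ in the divergent case. Nothing to add.
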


The fact that $s=1$ is excluded is important on two fronts.  The
first is trivial,  ${\mc H}^1 \big(W(\psi, \mathbf{p}) \big) \leq
{\mc H}^1(\I) = 1 $ and therefore can not possibly be infinite. The
other is more interesting. The sum in Theorem \ref{cozhs} at $s=1$
does not coincide with the sum appearing in Theorem \ref{coz} which
provides the criteria for the `size' of $W(\psi, \mathbf{p})$
expressed in terms on Lebesgue measure. Thus, it is impossible to
unify the Hausdorff and Lebesgue measure statements without
appealing to the `raw' sum in Theorem \ref{monthm1hs}.

A straightforward consequence of Theorem \ref{cozhs} is that
$$
\dim \, W(\psi, \mathbf{p})  \   =  \ \inf \{ s :
\textstyle{\sum_{n\in\N}  \, n^{1-s}  \;  \psi (n)^s<\infty }  \}
\, .
$$
In particular, let us consider the case when $\psi(n) = n^{-\tau}
\ (\tau > 0) $ and  write $W(\tau, \mathbf{p})$ for $ W(\psi,
\mathbf{p})$. Then, the following statement can be regarded as the
`mixed' analogue of the classical Jarn\'{\i}k--Besicovitch Theorem.

\begin{corollary}\label{cozhsJB}
Let $p_1,\ldots , p_k$ be  distinct prime numbers and let $\tau \geq
1$.  Then,
$$
\dim  W(\tau, \mathbf{p})  =  \frac{2}{\tau + 1}  \ .
$$
\end{corollary}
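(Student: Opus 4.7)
The plan is to deduce this as an immediate corollary of the dimension formula
$$\dim W(\psi, \mathbf{p}) = \inf\bigl\{s : \textstyle\sum_{n\in\N} n^{1-s}\,\psi(n)^s < \infty\bigr\}$$
displayed just before the statement, by specialising $\psi(n)=n^{-\tau}$ and determining the critical exponent of the resulting series. With this choice of $\psi$, the relevant sum becomes $\sum_{n\in\N} n^{1-s(1+\tau)}$, which converges precisely when $s(1+\tau) > 2$. The natural candidate for the dimension is therefore $s_0 := 2/(\tau+1)$, and the hypothesis $\tau\ge 1$ ensures that $s_0\in(0,1]$, so that Theorem \ref{cozhs} is applicable.

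For the \emph{generic} range $\tau>1$ one has $s_0<1$, and Theorem \ref{cozhs} can be invoked directly on both sides of the critical value. If $s\in(s_0,1)$ then $1-s(1+\tau)<-1$ so the series converges, giving $\mc H^s(W(\tau,\mathbf{p}))=0$ and hence $\dim W(\tau,\mathbf{p})\le s$; letting $s\downarrow s_0$ yields $\dim W(\tau,\mathbf{p})\le s_0$. Conversely, for any $s\in(0,s_0)$ one has $1-s(1+\tau)>-1$, so the series diverges and Theorem \ref{cozhs} delivers $\mc H^s(W(\tau,\mathbf{p}))=\infty$, forcing $\dim W(\tau,\mathbf{p})\ge s$ and, after taking $s\uparrow s_0$, the matching lower bound.

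The one point that requires care, and what I would flag as the only real obstacle, is the boundary case $\tau=1$, where $s_0=1$ lies exactly at the upper endpoint of the range in which Theorem \ref{cozhs} is stated. The upper bound here is free, since $W(1,\mathbf{p})\subseteq\I$ forces $\dim W(1,\mathbf{p})\le 1$. For the lower bound I would apply Theorem \ref{cozhs} at any $s\in(0,1)$: then $1-s(1+\tau)=1-2s>-1$, so $\sum n^{1-2s}$ diverges, $\mc H^s(W(1,\mathbf{p}))=\infty$, and hence $\dim W(1,\mathbf{p})\ge s$ for every $s<1$. Letting $s\uparrow 1$ completes the proof in this boundary case and, combined with the previous paragraph, establishes $\dim W(\tau,\mathbf{p})=2/(\tau+1)$ for every $\tau\ge 1$.
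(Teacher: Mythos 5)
Your proposal is correct and follows essentially the same route as the paper: deduce the dimension formula $\dim W(\psi,\mathbf{p}) = \inf\{s : \sum_n n^{1-s}\psi(n)^s < \infty\}$ from Theorem \ref{cozhs}, then specialise $\psi(n)=n^{-\tau}$ and read off the critical exponent $2/(\tau+1)$. Your extra care at the endpoint $\tau=1$ (where $s_0=1$ sits outside the range $0<s<1$ of Theorem \ref{cozhs}, so one takes $s\uparrow 1$) is a correct and welcome elaboration of a point the paper leaves implicit.
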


For background and further details regarding the general Hausdorff
measure theory of metric Diophantine approximation see
\cite{Beresnevich-Bernik-Dodson-Velani-Roth,BDV06} and references
within.

\subsection{Exponents of Diophantine approximation}

Motivated by the `mixed'  analogue of the classical
Jarn\'{\i}k--Besicovitch Theorem, we introduce the `mixed' analogue
of the classical notion of exact order. For the sake of clarity and
simplicity,  we restrict our attention to the case of one (fixed)
prime $p$. For a real number $\xi$, let $\tau_p(\xi)$ denote the
supremum of the real numbers $\tau$ such that the inequality
$$
|n|_p \| n \xi \|  \, \leq \, n^{-\tau}  \;
$$
has infinitely many solutions  $n \in \N$.  Recall, the \emph{exact
order} $\tau (\xi)$ of $\xi$ is defined to be  the supremum of the
real numbers $\tau$ such that the inequality
\begin{equation}\label{svtau}
\| n \xi \| \, \leq \, n^{-\tau}  \;
\end{equation}
has infinitely many solutions  $n \in \N$.   For every real number
$t \geq 1$, we know  that
\begin{equation}\label{dimeq9}
{\mc H}^{\frac{2}{t + 1}}  \big( \{\xi : \tau(\xi) = t\} \big)  =
{\mc H}^{\frac{2}{t + 1}}  \big( \{\xi : \tau_p(\xi) = t\}  \big) =
\ \infty
\end{equation}
and
\begin{equation}\label{dimeq}
\dim \, \{\xi : \tau(\xi) = t\} = \dim \, \{\xi : \tau_p(\xi) = t\}
= \frac{2}{t + 1}  \ .
\end{equation}
The dimension result for the classical exact order set  $ \{\xi :
\tau(\xi) = t\}  $  was first explicitly stated by  G\"{u}ting  --
see \cite{exactBDV} for a `modern' proof which also implies the
measure statement and references within for `exact order'
background. Following the basic principle exploited in
\cite{exactBDV}, it is easy to deduce the measure result for the
mixed exact order set $ \{\xi : \tau_p(\xi) = t\}$ from Theorem
\ref{cozhs} and the fact that
%
%The dimension result for the classical exact order set  $ E(t) :=
%\{\xi : \tau(\xi) = t\}  $  was first explicitly stated by
%G\"{u}ting  -- see \cite{exactBDV} for a `modern' proof and
%references within for `exact order' background. Following the basic
%principle exploited in \cite{exactBDV}, it is easy to deduce the
%dimension result for the mixed exact order set $ E(t, p) := \{\xi :
%\tau_p(\xi) = t\}$ from Theorem \ref{cozhs} and the fact that
$$
W(\tau, p) \setminus W(\psi, p)   \, \subset \,  \{\xi : \tau_p(\xi)
=   t \}
$$
with $\tau  := t$ and  $\psi(n) := n^{-t} (\log n)^{-(t + 1 )}   \,
$.   Note that the Hausdorff measure result implies the lower bound for the Hausdorff dimension statement. The complementary upper bound is a consequence of the fact that $   \{\xi : \tau_p(\xi) =   t \}   \subset   W(t+ \epsilon, p ) $ for any $\epsilon > 0$.

\medskip

 On using the trivial
fact that $n^{-1} \le |n|_p \le 1$,   it follows that for any real
number $\xi$
\begin{equation*}\label{dimtriv}
\tau(\xi) \le \tau_p(\xi) \le \tau(\xi) + 1   \, .
\end{equation*}
Deeper still, for any  given $\delta$ in $[0, 1]$ and $t$
sufficiently large, it is possible to adapt  the procedure described
in \cite{Bu08} to construct explicit real numbers $\xi$ such that
$$
\tau(\xi) = t    \qquad  {\rm and \ }  \qquad  \tau_p(\xi) = t +
\delta  \, .
$$
Consequently, the set of values taken by the function $\tau_p -
\tau$ is precisely the whole interval $[0, 1]$.    We suspect that
the set of real numbers for which the classical and mixed  exact
order exponents differ ($\delta > 0$)  is of maximal dimension; that
is
$$
\dim \{\xi : \tau_p (\xi) > \tau (\xi) \} = 1   \, .
$$
Currently we are only able to prove that the dimension is positive.
Indeed this is a consequence of  showing that
$$
\dim \{\xi : \tau_p (\xi) = \tau (\xi) + 1 \}  > 0   \ .
$$
The proof relies  on being able to  construct a Cantor type subset
of  $\{\xi : \tau_p (\xi) = \tau (\xi) + 1 \}$  consisting  of real
numbers  all of whose best rational approximations have a
denominator a power of $p$.

We now turn our attention to the situation for which the  classical and mixed
exact order exponents are equal ($\delta = 0$) to a given value $t
\geq 1$.  Let   $ W(\tau)$  denote the set of real numbers  for which inequality  (\ref{svtau}) has infinitely many solutions  and observe that
$$
W(\tau) \setminus W(\psi, p)   \, \subset \,  \{\xi : \tau_p(\xi) = \tau(\xi) =t \}
$$
with $\tau  := t$ and  $\psi(n) := n^{-t} (\log n)^{-(t + 1 )}   \,
$. On combining this with Theorem \ref{cozhs} and the  classical fact that $ {\mc H}^{\frac{2}{t + 1}} (W(t))  = \infty $, it follows that
\begin{equation*}\label{dimeq9Q}
{\mc H}^{\frac{2}{t + 1}}  \big( \{\xi : \tau_p(\xi) = \tau(\xi) =t  \}  \big) =
\ \infty   \  .
\end{equation*}
 In turn it is easy to deduce, that for $t \geq 1 $
$$ \dim \{\xi : \tau_p(\xi) = \tau(\xi) =t \}     =
\frac{2}{t + 1}  \ . $$
Clearly, these results imply our `opening' results given by (\ref{dimeq9}) and (\ref{dimeq}).   However the opening results do not even imply that $ \{\xi : \tau_p(\xi) = \tau(\xi) =t \} $ is non-empty.

% Note that with $t=1$, since both the classical and mixed exact
%order  sets are of full Lebesgue measure, it follows  that $
%\tau_p(\xi) = \tau(\xi) =1 $ for almost every real number $\xi$ and
%therefore $ \dim \{\xi : \tau_p(\xi) = \tau(\xi) =1 \} =1   $. For
%$t > 1 $, the dimension  statement given by (\ref{dimeq}) implies
%that $$ \dim \{\xi : \tau_p(\xi) = \tau(\xi) =t \}     \leq
%\frac{2}{t + 1}  \ . $$

\subsection{Removing monotonicity }

The method of proof of Theorem \ref{monthm1} allows us to draw
conclusions even when the approximating function $\psi$ is
non-monotonic. For example we can prove the following result.
\begin{theorem}\label{nonmonthm1}
Let $p_1,\ldots , p_k$ be  distinct prime numbers and  $ f_1,\ldots
,f_k:\R\rar\R$ be positive functions.  Furthermore, let
$\psi:\N\rar\R$ be a non-negative  function. Then,  for almost every
real number $\alpha$  the inequality
\begin{equation}
f_1(|n|_{p_1})\cdots f_k(|n|_{p_k}) \ |n \alpha- a |\le  \psi (n)
\qquad (a,n)=1
\end{equation}
has infinitely many solutions if  there exists $\epsilon>0$ for
which
\begin{equation}\label{divcond3}
\sum_{n\in\N}   \varphi (n)   \, \left(\frac{\psi (n)}{   n \,
f_1(|n|_{p_1})\cdots f_k(|n|_{p_k})}\right)^{1 + \epsilon}=\infty.
\end{equation}
\end{theorem}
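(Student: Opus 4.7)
The plan is to imitate the proof of Theorem \ref{monthm1} by applying the Duffin-Schaeffer Theorem to
$$\tilde\psi(n) := \frac{\psi(n)}{f_1(|n|_{p_1})\cdots f_k(|n|_{p_k})},$$
so as to secure, for almost every $\alpha$, infinitely many coprime solutions $(a,n)$ to $|n\alpha - a| \leq \tilde\psi(n)$. In the proof of Theorem \ref{monthm1} the monotonicity of $\psi$ was used only through the partial-summation step that verified hypothesis (\ref{DSthmhyp}); here the stronger $L^{1+\epsilon}$ divergence (\ref{divcond3}) will play that role.

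After a standard preliminary normalization we may assume $\tilde\psi(n) \leq 1/2$ for every $n$. The pointwise bound $(\tilde\psi(n)/n)^{1+\epsilon} \leq \tilde\psi(n)/n$ then converts (\ref{divcond3}) into
$$\sum_{n \in \N} \frac{\varphi(n) \tilde\psi(n)}{n} = \infty,$$
which forces $\sum_n \tilde\psi(n) = \infty$ and supplies the first half of the Duffin-Schaeffer hypothesis (\ref{ds1}). To handle the ratio condition, introduce for each $c > 0$ the set $E_c := \{n \in \N : \varphi(n)/n \geq c\}$ and the truncated approximation function $\psi^{\sharp}(n) := \tilde\psi(n)\,\mathbf{1}_{E_c}(n) \leq \tilde\psi(n)$. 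On $E_c$ the ratio appearing in (\ref{ds1}) is trivially bounded below by $c$, so the Duffin-Schaeffer Theorem applied to $\psi^{\sharp}$ yields the desired conclusion, provided $\sum_{n \in E_c} \tilde\psi(n) = \infty$ for some $c > 0$.

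The main obstacle is establishing this last divergence: one must rule out the pathological scenario in which the $L^{1+\epsilon}$ hypothesis (\ref{divcond3}) is carried entirely by integers $n$ with $\varphi(n)/n$ arbitrarily small. To this end, a two-parameter dyadic decomposition of $\N$ along the level sets of both $\varphi(n)/n$ and $\tilde\psi(n)/n$, combined with a H\"{o}lder-type argument that exploits the extra power $\epsilon$ and the classical quantitative estimate $\#\{n \leq N : \varphi(n)/n < c\} = o(N)$ as $c \to 0$ (uniformly in $N$), isolates a shell on which $\varphi(n)/n$ is bounded below and the unweighted sum $\sum \tilde\psi(n)$ is infinite. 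The Duffin-Schaeffer Theorem applied on that shell then completes the proof.
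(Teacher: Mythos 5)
Your proposal misreads the nature of this theorem. In the paper, Theorem~\ref{nonmonthm1} is not proved by reduction to the Duffin-Schaeffer Theorem; rather, the authors explicitly observe that it is an immediate consequence of Corollary~1 of the ``extra divergence'' paper of Haynes, Pollington and Velani (\cite{haypolvel}), applied to the function $\Psi(n) = \psi(n)/\bigl(f_1(|n|_{p_1})\cdots f_k(|n|_{p_k})\bigr)$. That corollary -- which states precisely that divergence of $\sum_n \varphi(n)(\Psi(n)/n)^{1+\epsilon}$ for some $\epsilon>0$ yields infinitely many coprime solutions to $|n\alpha - a| \le \Psi(n)$ almost everywhere -- is a genuine strengthening of the Duffin-Schaeffer Theorem, obtained there by a second-moment (quasi-independence) argument in the style of Pollington and Vaughan, not by truncating $\Psi$ and appealing to the Duffin-Schaeffer Theorem. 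What you are attempting, in effect, is to prove the Haynes-Pollington-Velani result as a triviality, and the point where this fails is exactly the step you leave vague.

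Concretely, the claim that the hypothesis (\ref{divcond3}) allows one to ``isolate a shell $E_c = \{n : \varphi(n)/n \ge c\}$ on which the unweighted sum $\sum_{n \in E_c}\tilde\psi(n)$ is infinite'' is false. Let $c_j \downarrow 0$, put $B_j := \{n : c_{j+1} \le \varphi(n)/n < c_j\}$, and note each $B_j$ has positive natural density (by the Erd\H{o}s--Schoenberg distribution of $\varphi(n)/n$). Choose $0 < N_1 < N_2 < \cdots$ growing rapidly and set $\tilde\psi(n) = \delta_j \le 1/2$ for $n \in B_j \cap (N_{j-1}, N_j]$ and $\tilde\psi(n)=0$ otherwise. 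For any fixed $c>0$, the set $E_c$ meets the support of $\tilde\psi$ in only finitely many of the blocks $B_j$, so $\sum_{n \in E_c}\tilde\psi(n)$ is a finite sum and hence finite; yet by taking $N_j$ large enough relative to $\delta_j$ and $c_j$ one makes the block contribution to $\sum_n \varphi(n)(\tilde\psi(n)/n)^{1+\epsilon}$ from $B_j \cap (N_{j-1}, N_j]$ bounded below by $1$, so (\ref{divcond3}) diverges. Thus no single-shell truncation can satisfy the Duffin-Schaeffer hypothesis (\ref{ds1}) here, and the ``two-parameter dyadic decomposition / H\"older argument'' you gesture at cannot produce one. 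The extra power of $\epsilon$ really is doing heavy lifting of a different kind: it controls the pairwise overlap sums $\sum_{m,n}|A_m \cap A_n|$ directly, which is what the quoted Corollary~1 of \cite{haypolvel} exploits. So the honest proof here is the paper's: cite that corollary and substitute $\Psi$.
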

We point out that there are examples of non-monotonic $\psi$ for
which (\ref{divcond2}) diverges  but (\ref{monsols1}) has only
finitely many solutions almost everywhere. The Duffin-Schaeffer
counterexample at the end of \cite{DuffinSchaeffer} can easily be
modified to  show how this can happen.   In other words,
disallowing non-reduced solutions and  thereby  introducing the
Euler phi function in Theorem \ref{nonmonthm1} is absolutely
necessary when dealing with non-monotonic approximating functions.

Theorem \ref{nonmonthm1} is a trivial consequence  of a known result
regarding  the Duffin-Schaeffer Conjecture.  Basically, given $\psi$
we simply  apply Corollary 1 of  \cite{haypolvel} to the
approximating function
\begin{equation}  \label{standfunc}
 \Psi(n)  :=    \frac{\psi (n)}{    f_1(|n|_{p_1})\cdots f_k(|n|_{p_k})}  \ .
\end{equation}

%
%\vspace{2ex}
%
%For the sake of completeness, we mention that the Mass Transference Principle
%enables us to place Theorem \ref{nonmonthm1} within the general
%setting of Hausdorff measures. The details are left to the energetic
%reader.

\subsection{Simultaneous approximation}

So far we have restricted our attention to approximating a single
real number  $ \alpha \in \R$.  Clearly, it is natural to develop
the theory of `mixed'  simultaneous approximation in which one
considers points $(\alpha_1,\ldots,\alpha_m) \in \R^m$ and  the
system of inequalities
\begin{equation}\label{monsols1sv}
f_1(|n|_{p_1})\cdots f_k(|n|_{p_k}) \ |n\alpha_i- a_i | \le  \psi
(n) \qquad 1\le i\le m    \ .
 \end{equation}
The following result is a direct consequence of Gallagher's theorem
\cite{Gallagher65} in the classical theory of simultaneous
approximation. Indeed, given $\psi$ we simply  apply Gallagher's
theorem  to the approximating function $\Psi$ given by
(\ref{standfunc}).

\begin{theorem}   \label{thmsimgall}
Let $p_1,\ldots , p_k$ be  distinct prime numbers and  $ f_1,\ldots
,f_k:\R\rar\R$ be positive functions.  Furthermore, let
$\psi:\N\rar\R$ be a non-negative  function and $m \geq 2 $ be an
integer. Then, for almost every  $(\alpha_1,\ldots,\alpha_m) \in
\R^m$  the system of inequalities  given by (\ref{monsols1sv}) with
$
 (a_1,\ldots,a_m,n)=1 $
has infinitely (resp. finitely) many solutions $(a_1,\ldots,a_m,n)
\in \Z^m \times \N$ if
\begin{equation}\label{divcond2sv}
\sum_{n\in\N} \Big( \frac{\psi (n)}{f_1(|n|_{p_1})\cdots
f_k(|n|_{p_k})}\Big)^m
\end{equation}
diverges (resp. converges).
\end{theorem}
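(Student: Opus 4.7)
The plan is to reduce Theorem \ref{thmsimgall} directly to Gallagher's 1965 theorem on simultaneous Diophantine approximation by absorbing all of the $p$-adic data into a new approximating function. Concretely, define
\[
\Psi(n) := \frac{\psi(n)}{f_1(|n|_{p_1}) \cdots f_k(|n|_{p_k})}.
\]
Since each factor $f_i(|n|_{p_i})$ depends only on $n$ and not on any $\alpha_i$, the system (\ref{monsols1sv}) together with the coprimality constraint $(a_1,\ldots,a_m,n)=1$ is literally equivalent to $|n\alpha_i - a_i| \le \Psi(n)$ for all $1 \le i \le m$, under the same coprimality constraint. The divergence/convergence dichotomy (\ref{divcond2sv}) is then precisely the dichotomy for $\sum_n \Psi(n)^m$.

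The convergence half is routine Borel--Cantelli. Restrict to the unit cube $\I^m$: for each fixed $n$, the set of $(\alpha_1,\ldots,\alpha_m) \in \I^m$ admitting a solution is covered by at most $(n+1)^m$ axis-aligned boxes of side $2\Psi(n)/n$, hence has Lebesgue measure $\ll \Psi(n)^m$. Summability of (\ref{divcond2sv}) therefore forces the associated $\limsup$ set to have measure zero, independently of monotonicity of $\psi$.

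For the divergence half, I would invoke Gallagher's theorem \cite{Gallagher65} in its higher-dimensional form: if $m \ge 2$ and $\Psi:\N\rar\R$ is any non-negative function with $\sum_n \Psi(n)^m = \infty$, then for almost every $(\alpha_1,\ldots,\alpha_m) \in \R^m$ the system $|n\alpha_i - a_i| \le \Psi(n)$ ($1 \le i \le m$) has infinitely many solutions $(a_1,\ldots,a_m,n) \in \Z^m \times \N$ with $(a_1,\ldots,a_m,n)=1$. This is exactly what is needed, with no hypothesis of monotonicity on $\psi$; the reformulation above matches Gallagher's hypotheses verbatim.

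There is essentially no obstacle here beyond the bookkeeping of the reduction. The only point that deserves a line of comment in a write-up is that it is legitimate to absorb the $p$-adic factors into $\Psi$ even when $\psi$ is not monotonic, and that the higher-dimensional ($m \ge 2$) nature of Gallagher's theorem is exactly what allows us to dispense with the $\varphi(n)/n$-type correction that was needed in Theorem \ref{monthm1}. Thus the $p$-adic structure contributes nothing beyond the definition of $\Psi$, and Theorem \ref{thmsimgall} is in this sense a genuine corollary of Gallagher's work.
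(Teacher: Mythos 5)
Your proposal is correct and follows precisely the same route as the paper: absorb the $p$-adic factors into the single function $\Psi(n) = \psi(n)/\big(f_1(|n|_{p_1})\cdots f_k(|n|_{p_k})\big)$ and then cite Gallagher's 1965 theorem for the divergence half, with Borel--Cantelli handling convergence. The paper presents this as a one-line reduction via \eqref{standfunc}, and your write-up supplies exactly that reduction together with the (routine) convergence argument, so there is nothing to add or correct.
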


The following is an immediate corollary and generalizes Theorem
\ref{monthm1} to the simultaneous setting. Note that it is free of
any monotonicity assumption on $\psi$.

\begin{theorem} \label{thmsim}
Let $p_1,\ldots , p_k$ be  distinct prime numbers and  $ f_1,\ldots
,f_k:\R\rar\R$ be positive functions.  Furthermore, let
$\psi:\N\rar\R$ be a non-negative  function and $m \geq 2$ be an
integer. Then, for almost every   $(\alpha_1,\ldots,\alpha_m) \in
\R^m$  the system of inequalities  given by (\ref{monsols1sv}) has
infinitely (resp. finitely) many solutions $(a_1,\ldots,a_m,n) \in
\Z^m \times \N$ if the sum given by (\ref{divcond2sv}) diverges
(resp. converges).
\end{theorem}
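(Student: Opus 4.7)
The plan is to split into the divergence and convergence cases. Write $\Psi(n) := \psi(n)/\bigl(f_1(|n|_{p_1})\cdots f_k(|n|_{p_k})\bigr)$, so that the system (\ref{monsols1sv}) is simply $|n\alpha_i - a_i|\le \Psi(n)$ for $1\le i\le m$. The divergence direction will follow at once from Theorem \ref{thmsimgall}, while the convergence direction needs a short direct argument because Theorem \ref{thmsimgall} only controls solutions with $\gcd(a_1,\ldots,a_m,n)=1$.

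For the divergence direction, suppose $\sum_n \Psi(n)^m=\infty$. Theorem \ref{thmsimgall} then yields, for almost every $(\alpha_1,\ldots,\alpha_m)\in\R^m$, infinitely many coprime tuples $(a_1,\ldots,a_m,n)\in\Z^m\times\N$ satisfying (\ref{monsols1sv}); a fortiori there are infinitely many solutions after the coprimality condition is dropped. Observe that no monotonicity of $\psi$ intervenes at any point, since Theorem \ref{thmsimgall} is itself free of such an hypothesis.

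For the convergence direction, suppose $\sum_n \Psi(n)^m<\infty$. By periodicity restrict to $(\alpha_1,\ldots,\alpha_m)\in\I^m$, and for each $n\in\N$ let
$$
E_n \; := \; \bigl\{(\alpha_1,\ldots,\alpha_m)\in\I^m : \exists (a_1,\ldots,a_m)\in\Z^m \text{ with } |n\alpha_i-a_i|\le\Psi(n)\ \forall i\bigr\}.
$$
Each one-dimensional slice of $E_n$ in the variable $\alpha_i$ is a union of at most $n+1$ intervals of length at most $2\Psi(n)/n$, hence of Lebesgue measure at most $4\Psi(n)$; by the product structure, $|E_n| \le \min\{1,(4\Psi(n))^m\}$. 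Since $\Psi(n)\to 0$, eventually $|E_n|\le (4\Psi(n))^m$, and so $\sum_n |E_n|<\infty$. The Borel--Cantelli lemma implies that almost every $\alpha$ lies in only finitely many $E_n$, and since for each such $n$ only finitely many admissible vectors $(a_1,\ldots,a_m)$ can arise, the full solution set is finite.

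There is no real obstacle: the only subtlety is the relaxation of coprimality relative to Theorem \ref{thmsimgall}, and this is dealt with by using Theorem \ref{thmsimgall} only in one direction and invoking Borel--Cantelli directly in the other, so coprimality plays no role in either half.
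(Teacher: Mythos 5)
Your proof is correct and fills in the details of what the paper treats as an immediate corollary of Theorem \ref{thmsimgall}: the divergence direction by simply dropping the coprimality restriction, the convergence direction by a direct Borel--Cantelli count on the product sets $E_n$. Your observation that the convergence case should be argued directly rather than deduced from the coprime statement is a genuine point of care, since without monotonicity of $\psi$ a solution $(a_1,\ldots,a_m,n)$ with $\gcd = d>1$ need not reduce to a solution of the system at $n/d$.
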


\vspace{2ex}

Needless to say, the Mass Transference Principle enables us to place
Theorems  \ref{thmsimgall}  $\&$    \ref{thmsim}   within the
general setting of Hausdorff measures.  In particular, the
Hausdorff measure generalization of  Theorem  \ref{thmsim} extends
Theorem   \ref{monthm1hs}   to the simultaneous setting.

\begin{theorem}
Let $p_1,\ldots , p_k$ be  distinct prime numbers and  $ f_1,\ldots
,f_k:\R\rar\R$ be positive functions.  Furthermore, let
$\psi:\N\rar\R$ be a non-negative  function and let $W_m(\psi,
\mathbf{p}, \mathbf{f})$ denote the set of points  in the unit cube
$\I^m:=[0,1]^m$ for which the system of inequalities  given by
(\ref{monsols1sv}) has infinitely many solutions. Then, for $m \geq
2 $ and any  $0<s \leq m$
\begin{equation*}
{\mc H}^s \big(W_m(\psi, \mathbf{p}, \mathbf{f}) \big)  =\left\{
\begin{array}{rl}
0 & { if} \;\;\;
\displaystyle \sum_{n\in\N}  \, n^{m}  \;  \Big(\frac{\psi (n)}{n \, f_1(|n|_{p_1})\cdots f_k(|n|_{p_k})} \Big)^s<\infty\,\\[4ex]
{\mc H}^s(\I^m) & { if} \;\;\; \displaystyle \sum_{n\in\N} \, n^{m}
\;  \Big(\frac{\psi (n)}{  n \, f_1(|n|_{p_1})\cdots f_k(|n|_{p_k})}
\Big)^s=\infty\,
\end{array}
\right..
\end{equation*}
\end{theorem}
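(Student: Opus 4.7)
The plan is to deduce the statement from its Lebesgue counterpart, Theorem \ref{thmsim}, by means of the Mass Transference Principle (MTP) of Beresnevich and Velani \cite{MTP}. Setting
\[ \Psi(n) \;:=\; \frac{\psi(n)}{f_1(|n|_{p_1})\cdots f_k(|n|_{p_k})}, \]
one observes that $W_m(\psi,\mathbf{p},\mathbf{f})$ is the $\limsup$ of the hyperrectangles
\[ R_n(\mathbf{a}) \;:=\; \prod_{i=1}^m \bigl[a_i/n - \Psi(n)/n,\; a_i/n + \Psi(n)/n\bigr]\cap\I^m, \]
indexed by $(n,\mathbf{a})\in\N\times\Z^m$ with $0 \le a_i \le n$.

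For the convergence case I would cover $W_m(\psi,\mathbf{p},\mathbf{f})$ by the tail of these rectangles and apply the Hausdorff-Cantelli lemma. Each $R_n(\mathbf{a})$ has diameter at most $2\sqrt{m}\,\Psi(n)/n$ and there are at most $(n+1)^m$ choices of $\mathbf{a}$, whence
\[ \mc{H}^s\bigl(W_m(\psi,\mathbf{p},\mathbf{f})\bigr) \;\le\; \liminf_{N\to\infty}\sum_{n\ge N}(n+1)^m \bigl(2\sqrt{m}\,\Psi(n)/n\bigr)^s \;\ll_{m,s}\; \liminf_{N\to\infty}\sum_{n\ge N} n^m\bigl(\Psi(n)/n\bigr)^s \;=\; 0 \]
by hypothesis.

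For the divergence case I would apply the MTP. Note that $W_m(\psi,\mathbf{p},\mathbf{f})$ contains $\limsup B_n(\mathbf{a})$, where $B_n(\mathbf{a}) := B(\mathbf{a}/n,\rho_n)$ is the Euclidean ball inscribed in $R_n(\mathbf{a})$, of radius $\rho_n := \Psi(n)/(n\sqrt{m})$; we may assume $\rho_n\to 0$, else $W_m(\psi,\mathbf{p},\mathbf{f}) = \I^m$ and there is nothing to prove. The MTP in $\R^m$ with dimension function $r\mapsto r^s$ asserts that if the dilated $\limsup$ set $\tilde{W} := \limsup_{(n,\mathbf{a})} B(\mathbf{a}/n,\rho_n^{s/m})$ has full Lebesgue measure in every open ball of $\R^m$, then $\limsup B_n(\mathbf{a})$ has full $\mc{H}^s$ measure in every ball, and in particular $\mc{H}^s\bigl(W_m(\psi,\mathbf{p},\mathbf{f})\bigr) = \mc{H}^s(\I^m)$. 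To verify this Lebesgue input, note that $\tilde{W}$ contains the hyperrectangular $\limsup$ set attached to the approximating function $\Psi^*(n) := n\rho_n^{s/m}/\sqrt{m}$ (inscribing balls in cubes); Theorem \ref{thmsim}, applied with all $f_i\equiv 1$ and this new $\Psi^*$, gives this set full Lebesgue measure in $\I^m$ provided $\sum_n \Psi^*(n)^m$ diverges, which (up to an absolute constant) is the same as the divergence of $\sum_n n^m(\Psi(n)/n)^s$ --- exactly our hypothesis. Since the resulting set is $\Z^m$-periodic, full measure in $\I^m$ upgrades to full measure in every ball, so the MTP is applicable.

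The only place where care is required is in the bookkeeping: tracking the constant $\sqrt{m}$ coming from inscribing balls in hyperrectangles, and tracking the transformation $\Psi \mapsto \Psi^*$ so that the MTP dilation $r\mapsto r^{s/m}$ exactly matches the target $\mc{H}^s$-divergence sum. Once these routine bookkeeping steps are in place, the MTP cleanly trades the $\mc{H}^s$-conclusion for a Lebesgue statement that is immediate from Theorem \ref{thmsim}.
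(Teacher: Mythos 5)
Your proof is correct and is precisely the argument the paper sketches: the paper gives no details and simply states that the Mass Transference Principle applied to Theorem \ref{thmsim} yields the result, which is exactly what you carry out. One small slip worth flagging: the ball inscribed in the cube $R_n(\mathbf{a})$ of half-side $\Psi(n)/n$ has radius $\Psi(n)/n$, not $\Psi(n)/(n\sqrt{m})$; your choice of $\rho_n$ is therefore a strictly smaller ball than the inscribed one, which is harmless (containment $B_n(\mathbf{a})\subseteq R_n(\mathbf{a})$ still holds and the extra $\sqrt{m}$ is absorbed into the implied constant in the final divergence comparison), but the parenthetical description is imprecise.
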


%%
%%It is easy to see that the well known theorem due to Pollington and Vaughan (\cite[Theorem 1]{PollingtonVaughan}) implies the following related result concerned with simultaneous approximation.
%%\begin{theorem}\label{mixedsimultthm}
%%Suppose that $k\in\N, k\ge 2,$ that $\psi$ is a non-negative arithmetical function, and that $p_1,\ldots ,p_k$ are primes. Then the system of inequalities
%%\begin{equation}\label{simultsols1}
%%|n|_{p_i}  \ |n\alpha_i- a_i | \le  \psi (n) ,\quad (a_i,n)=1,\quad 1\le i\le k,
%%\end{equation}
%%has infinitely many solutions  for almost every $(\alpha_1,\ldots ,\alpha_k)\in  \R^k$ if and only if
%%\begin{equation}\label{divcond4}
%%\sum_{n\in\N}\left(\frac{\varphi (n)\psi (n)}{n|n|_{p_i}}\right)^k=\infty.
%%\end{equation}
%%\end{theorem}
%

\vspace*{4ex}

\subsection{An intriguing `multiplicative' problem}

Let  $\psi:\N\rar\R$ be a non-negative decreasing function. It is
natural to  attempt to generalize Theorem \ref{monthm1} so as to
incorporate  approximations of the form
\begin{equation}\label{multiplereal}
f_1(|n|_{p_1})\cdots
f_k(|n|_{p_k})\|n\alpha_1\|\cdots\|n\alpha_m\|\le\psi (n)  \ ,
\end{equation}
where $(\alpha_1,\ldots ,\alpha_m)\in \R^m $.   We would expect to
be able to prove that for almost every  $(\alpha_1,\ldots
,\alpha_m)\in \R^m $  the inequality   given by (\ref{multiplereal})
has infinitely (resp. finitely) many solutions $n \in \N$ if
$$
\sum_{n\in\N}  \;  (\log n)^{m-1}  \, \frac{\psi
(n)}{f_1(|n|_{p_1})\cdots f_k(|n|_{p_k})}
$$
diverges (resp. converges).  The method which we used to prove
Theorem \ref{monthm1} would work for this more general setup if we
could establish the following `multiplicative'  generalization of
the Duffin-Schaeffer Theorem.
\begin{conj}
Let $\psi:\N\rar\R$ be a non-negative  function and let $m\in\N$.
Then, for almost every $(\alpha_1,\ldots ,\alpha_\ell)\in \R^m$ the
inequality
\begin{equation}\label{multiplereal2}
\|n\alpha_1\|\cdots\|n\alpha_m\|\le \  \psi (n)
\end{equation}
has infinitely  many solutions  $n\in\N $ if
%\begin{align*}
%\sum_{n\in\N} (\log n)^{m-1}\psi (n) = \infty& \quad { and}\\
% \limsup_{N\rar\infty}\left(\sum_{n\le N} \left(\frac{\varphi (n)}{n}\right)^m(\log n)^{m-1}\psi (n)\right)& \left(\sum_{n\le N}  (\log n)^{m-1}\psi (n) \right)^{-1}>0.
%\end{align*}

$$
\sum_{n\in\N} (\log n)^{m-1}\psi (n) = \infty
$$
and
$$
 \limsup_{N\rar\infty}\left(\sum_{n\le N} \left(\frac{\varphi (n)}{n}\right)^m(\log n)^{m-1}\psi (n)\right)  \left(\sum_{n\le N}  (\log n)^{m-1}\psi (n) \right)^{-1} \, >  \, 0  \ .
$$
\end{conj}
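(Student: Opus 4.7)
The plan is to mimic the strategy used for Theorem \ref{monthm1} (reducing to the Duffin-Schaeffer Theorem) but now in $m$ variables, proceeding by induction on $m$. The base case $m=1$ is precisely the Duffin-Schaeffer Theorem recalled in the paper. For the inductive step the natural idea is to fix $(\alpha_1,\ldots,\alpha_{m-1})\in[0,1]^{m-1}$, apply DS to $\alpha_m$ with the fibered approximating function
$$
\Psi_\alpha(n) \ := \ \frac{\psi(n)}{\|n\alpha_1\|\cdots\|n\alpha_{m-1}\|},
$$
and invoke Fubini. The problem therefore reduces to verifying the two DS hypotheses for $\Psi_\alpha$ on a full-measure set of $(\alpha_1,\ldots,\alpha_{m-1})$.

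The first step is to show that $\sum_{n}\min(1,\Psi_\alpha(n))=\infty$ almost surely. The basic input is the elementary estimate $\int_0^1 \min(1, c/\|n\alpha\|)\,d\alpha \asymp c\log(1/c)$, which when iterated $m-1$ times and combined with Fubini shows that the average of the truncated series is comparable to $\sum_n \psi(n)(\log(1/\psi(n)))^{m-1} \asymp \sum_n (\log n)^{m-1}\psi(n)$, diverging by hypothesis. To upgrade this `divergence in mean' to almost-sure divergence one uses a Paley-Zygmund style argument, controlling the second moment of the series by estimating the correlations between $1/\|n\alpha\|$ and $1/\|n'\alpha\|$ for distinct $n,n'$. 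These correlations are governed by the joint divisibility structure of $n$ and $n'$, and introduce the Erd\H{o}s-Pollington-Vaughan arithmetic combinatorics familiar from the classical DS theorem.

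The harder second step is verifying the quasi-independence condition (\ref{ds1}) for $\Psi_\alpha$ almost surely. Here the factor $(\varphi(n)/n)^m$ in the conjecture's hypothesis should arise as follows: each independent integration against $1/\|n\alpha_i\|$, when restricted to neighbourhoods of fractions $a/n$ with $(a,n)=1$, contributes one factor of $\varphi(n)/n$, so that the reduced-fraction analogue of the summand carries total weight $\asymp (\varphi(n)/n)^m (\log n)^{m-1}\psi(n)$. The genuine obstacle is that the quasi-independence is a pointwise $\limsup$ ratio rather than an expectation, so mere averaging does not suffice; one needs a large-deviations or ergodic-type pointwise statement along almost every fibre. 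An alternative formulation is to bypass the fibration and attack the problem directly in $\R^m$ by a second-moment Borel-Cantelli argument on the reduced sets
$$
E_n \ := \ \bigl\{\alpha\in[0,1]^m : \exists\,(a_1,\ldots,a_m),\ \gcd(a_i,n)=1,\ \textstyle\prod_{i=1}^{m}|n\alpha_i-a_i|\le\psi(n)\bigr\},
$$
which have measure $\asymp (\varphi(n)/n)^m (\log(1/\psi(n)))^{m-1}\psi(n)$, so that $\sum|E_n|=\infty$ under the hypothesis. In either formulation, the essential difficulty is an $m$-dimensional overlap estimate for $|E_n\cap E_{n'}|$ that simultaneously controls the multiplicative logarithmic blow-up near $\|n\alpha\|=0$ and the arithmetic coprimality conditions. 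It is precisely this overlap estimate --- the multiplicative analogue of the Pollington-Vaughan bound --- that appears to be beyond present technology, which is why the conjecture remains open.
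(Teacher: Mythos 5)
This statement is labeled as a Conjecture in the paper, and the authors do not give a proof of it --- so there is no ``paper's own proof'' against which to compare yours. Your write-up is candid about this, ending with the observation that the argument does not close.

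That said, your diagnosis of where the gap lies does not match the authors' own remarks. Immediately after stating the conjecture, they write that they are able to show the associated $\limsup$ set has \emph{positive} measure, and that ``the missing piece in our attempted proof is that we have been unable to establish a zero-one law for this $\limsup$ set.'' Positive measure is exactly what a second-moment/Paley--Zygmund argument delivers, and it already requires the multiplicative overlap estimate $|E_n\cap E_{n'}|$ that you identify as ``beyond present technology.'' So according to the authors the overlap estimate can be handled; what is missing is the cross-section/zero-one law needed to upgrade positive measure to full measure. This distinction matters for your fibred approach too: applying the one-dimensional Duffin--Schaeffer Theorem fibrewise and then invoking Fubini would require the quasi-independence ratio in (\ref{ds1}) to be verified pointwise for $\Psi_\alpha$ on a set of full measure, and this is again essentially a zero-one-type assertion. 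Put differently, your proposal correctly identifies that some pointwise (rather than averaged) statement is needed, but you locate the obstruction in the wrong place: it is the absence of a Cassels/Gallagher-type zero-one law in the multiplicative $m$-variable setting, not the pair-correlation bound, that the authors flag as the genuinely open ingredient. It would strengthen the proposal to make this explicit and to isolate the zero-one law as a separate stated problem, as the paper does.
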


\vspace*{2ex}

\noindent

We are `morally'  able to prove this conjecture. More precisely, we
are able to show that the associated  $\limsup$ set of points
satisfying (\ref{multiplereal2}) is of positive measure.    The
missing piece in our attempted proof is that we have been unable to
establish a zero-one law for this $\limsup$ set.  Indeed,
establishing such a law would be of interest in its own right.

%
%\noindent We are almost  able to prove this conjecture, however thus far our
%method has not been completely successful. The missing piece in our attempted
%proof is that we have been unable to establish a zero-one law for the $\limsup$
%set of the non-convex regions defined by (\ref{multiplereal2}).

\vspace*{1ex}

\noindent{\bf Problem.}  Let  $W_m^*(\psi)$ denote the set of points
in the unit cube $\I^m $ for which the inequality given by
(\ref{multiplereal2})  has infinitely  many solutions.  Prove that
the $m$-dimensional Lebesgue measure of the $\limsup$ set
$W_m^*(\psi)$ is either zero or one.

\vspace*{1ex}

\noindent We make one final comment concerning the conjecture. For
$m \geq 2 $,  the conjecture is likely to be true without imposing
the $\limsup$ condition. In other words,  the divergent sum
condition is all that is required.
%

%\noindent We are almost  able to prove this conjecture, however thus far our
%method has not been completely successful. The missing piece in our attempted
%proof is that we have been unable to establish a zero-one law for the $\limsup$
%set of the non-convex regions defined by (\ref{multiplereal2}).

\vspace*{2ex}

Attempting  to generalize Theorem \ref{monthm1} as above can be
viewed as developing a metrical theory of Diophantine approximation
within the framework of the following generalization of the de
Mathan-Teuli\'{e} Conjecture:
\begin{equation}\label{mlsv}
\liminf_{n\rar\infty}  n |n|_{p_1}\cdots |n|_{p_k}
\|n\alpha_1\|\cdots\|n\alpha_m\|  =  0   \qquad \forall  \
(\alpha_1,\ldots,\alpha_m) \in \R^m \ .
\end{equation}
As already mentioned in the introduction the statement is true when
$k \geq 2$ -- see also Section \ref{Bourg} below.  Thus, let us
assume that $k=1$ and note that  establishing (\ref{mlsv})  for
$m=1$ (the de Mathan-Teuli\'{e} Conjecture) trivially implies
(\ref{mlsv}) for all $m$.   However, one could argue that
establishing (\ref{mlsv}) should get easier the larger we take $m$.
Nevertheless, nothing seems to be known.

\begin{conj}
Let $p$ be a prime and $m\in\N$. For $m$ sufficiently large we have
that
$$
\liminf_{n\rar\infty}  n |n|_{p}  \|n\alpha_1\|\cdots\|n\alpha_m\|
=  0   \qquad \forall  \ (\alpha_1,\ldots,\alpha_m) \in \R^m \ .
$$
\end{conj}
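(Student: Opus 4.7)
Fix $\alpha = (\alpha_1, \ldots, \alpha_m) \in \R^m$ and write a candidate $n \in \N$ as $n = p^a q$ with $p \nmid q$, so that the quantity to be made small becomes $q \prod_{i=1}^m \|q p^a \alpha_i\|$. For each fixed $a$, Dirichlet's theorem on simultaneous approximation applied to the $m$-tuple $(p^a \alpha_1, \ldots, p^a \alpha_m)$ supplies $q \in \N$ with $q \prod_i \|q p^a \alpha_i\| \le c_m$ for an absolute constant $c_m$, so the bound $\liminf_n n |n|_p \prod_i \|n\alpha_i\| \le c_m$ is free. The content of the conjecture is to exploit the extra freedom in the exponent $a$ to push this baseline all the way to $0$.

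The plan is to recast the problem dynamically, following the strategy used by Einsiedler-Kleinbock and others in the mixed setting. Let $G = \mathrm{SL}_{m+2}(\R) \times \mathrm{SL}_{m+2}(\Q_p)$, let $\Gamma = \mathrm{SL}_{m+2}(\Z[1/p])$ embedded diagonally, and let $u_\alpha \in \mathrm{SL}_{m+2}(\R)$ be the standard unipotent encoding $(\alpha_1, \ldots, \alpha_m)$ as shifts of the first coordinate against the last $m$. Consider the multi-parameter diagonal action $\{\Phi_{t,s}\}$ on $G/\Gamma$ whose real component contracts the $\alpha$-directions against the $n$-direction and whose $p$-adic component expands the $n$-direction. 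By the Dani correspondence (Mahler's compactness criterion on $G/\Gamma$), the conjectured $\liminf = 0$ is equivalent to the orbit $\{\Phi_{t,s}(u_\alpha, e)\Gamma\}$ being unbounded in the cusp. For $m$ sufficiently large the acting torus has rank $m+1$, placing us in the higher-rank regime, where the measure rigidity theorems of Einsiedler-Katok-Lindenstrauss and their $S$-arithmetic extensions due to Einsiedler-Lindenstrauss severely constrain invariant probability measures of positive entropy. I would combine such rigidity with quantitative non-divergence estimates à la Kleinbock-Margulis, with the hope that at sufficiently high rank any orbit that remained forever in a compact set would have to support an invariant measure forced to be algebraic; the algebraic exceptions could then be ruled out directly by arguing on $\alpha$.

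The main obstacle is the classical gap between \emph{almost every} and \emph{every}: measure classification controls generic orbits, while the conjecture carries a universal quantifier over $\alpha \in \R^m$. Bridging this gap demands either showing that any hypothetical bounded orbit would force so much algebraic structure that it contradicts the genericity of an arbitrary $\alpha$, or proving a pointwise topological rigidity statement stronger than what is currently available. A secondary obstacle is supplying the initial positive entropy needed to invoke rigidity in the first place: one must produce at least one deep excursion of the orbit into the cusp before bootstrapping to arbitrary depth. Since the case $m = 1$ is precisely the de Mathan-Teuli\'{e} conjecture and remains wide open, any successful argument must genuinely exploit the rank increase as $m$ grows rather than any feature already available at $m = 1$; isolating and quantifying this ``dimensional bonus'' is, I expect, where the main work will lie.
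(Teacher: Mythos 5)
This statement is labelled a \emph{conjecture} in the paper, and the authors explicitly remark immediately before it that ``nothing seems to be known'' -- the paper contains no proof of it, so there is no argument of the authors' to compare yours against. What you have written is, accordingly and by your own admission, a strategy outline rather than a proof, and the obstacles you flag are genuine and are precisely the reasons the question is open.

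Two points are worth sharpening. First, the ``free'' baseline from Dirichlet already gives $\liminf \le 1$ with $a=0$ and does not use the $p$-adic factor at all; the entire difficulty is converting that into $0$, and nothing in the dynamical setup as you describe it supplies a mechanism for this. Second, your hope that ``sufficiently high rank'' of the acting torus will upgrade measure rigidity to a pointwise, universal statement is not supported by any existing theorem. In the case $k\ge 2$ treated by Einsiedler--Kleinbock, the universal conclusion comes from Furstenberg's theorem on the scarcity of $\times p_1$, $\times p_2$-invariant closed subsets of the circle -- an arithmetic rigidity phenomenon produced by two multiplicatively independent primes, not by the abelianised rank of a diagonal flow. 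With a single prime and many reals one only gets a single $p$-adic direction, and the real rank by itself has never produced an \emph{every-point} statement: even for the Littlewood conjecture (rank $2$, no prime), measure rigidity \`a la Einsiedler--Katok--Lindenstrauss gives only that the exceptional set has Hausdorff dimension zero. Thus the ``almost every versus every'' gap you identify is not a secondary obstacle to be patched at the end; it is the mathematical content of the conjecture, and the ``dimensional bonus'' you hope to exploit has no known analogue in the literature. Your sketch correctly identifies the landscape but does not constitute a proof and, at present, could not.
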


\vspace*{4ex}

\subsection{Various strengthenings   }  \label{Bourg}
In view of the recent work of Bourgain, Lindenstrauss, Michel  $\&$
Venkatesh \cite{BLMV}, it is possible to strengthen (\ref{fursten})
and therefore (\ref{mlsv}) when $k \geq2$. Indeed, given distinct
primes $p_1$ and $p_2$, it follows from Theorem 1.8 of \cite{BLMV}
that there exists a small positive constant $\kappa$ such that for
every real number $\alpha$
\begin{equation}\label{furstenbis}
\liminf_{n\rar\infty}  n (\log \log \log n)^{\kappa} |n|_{p_1}
|n|_{p_2}\|n\alpha\|  =0  \ .
\end{equation}
To be precise,  Theorem 1.8 of \cite{BLMV} can be applied unless
$\alpha$ is a Liouville number.  However, for Liouville numbers  the
statement given by (\ref{furstenbis}) trivially holds.

Returning to the  original de Mathan-Teuli\'{e} Conjecture,  for
quadratic numbers  $\alpha $ the stronger statement
\begin{equation}  \label{yannquad}
\liminf_{n\rar\infty}  n (\log n) |n|_{p}\|n\alpha\|  < + \infty \
\end{equation}
has been established in \cite{MathanTeulie}. Another class of
(transcendental) real numbers with bounded partial quotients and for
which (\ref{yannquad})  holds is given in \cite[Theorem
1]{BugeaudDrmotaMathan}.  It would be highly desirable to determine
whether or not there exist $\alpha$ for which  (\ref{yannquad})  is
violated.   It is shown in \cite{BuMo} that the set of real numbers
$\alpha$ for which
$$
\liminf_{n\rar\infty}  n (\log n)^2 |n|_{p} \|n\alpha\|   > 0
$$
has full Hausdorff dimension.  Most recently, a consequence of the main result  in \cite{mixdbsv} is that the set of real numbers
$\alpha$ for which
$$
\liminf_{n\rar\infty}  n (\log n)  \, (\log\log n) \,   |n|_{p} \|n\alpha\|   > 0
$$
has full Hausdorff dimension.  In all likelihood the full dimension statement is  true without the $\log\log n$ term.

%
%DO WE WANT TO STICK OUR NECKS OUT AND SAY THAT OUR GUT FEELING
%IS THAT NO SUCH $\alpha$ exist ?

\vspace{4ex}

\noindent{\em Acknowledgements. }
We would like to the thank the referee for carefully reading the paper and pointing out various oversights  in the original  version. The referee also observed that Lemma \ref{star} can be proved without using M\"{o}bius inversion, by replacing the condition in (\ref{refcomment}) that $(m,N)=1$ by $m\equiv 1\mod N$, and replacing the equality by an inequality.

SV would like to thank Boris Adamczewski for independently mentioning the `starter' problem  during the Gelfond-100 conference at Moscow State in 2007.  He would  also like to thank the fab three -- Bridget, Iona and Ayesha --
for putting up with and helping `hop along' this year -- much  appreciated!

\vspace*{2ex}

\vspace*{2ex}

\noindent Yann Bugeaud: Math\'ematiques, Universit\'e de Strasbourg,

\vspace{0mm}

\noindent\phantom{Yann Bugeaud: }7, rue Ren\'e Descartes, 67084
Strasbourg Cedex, France.

%\vspace{0mm}

\noindent\phantom{Yann Bugeaud: }e-mail: bugeaud@math.u-strasbg.fr

\vspace{3mm}

\noindent Alan K. Haynes: Department of Mathematics, University of
York,

\vspace{0mm}

\noindent\phantom{Alan K. Haynes: }Heslington, York, YO10 5DD,
England.

%\vspace{0mm}

\noindent\phantom{Alan K. Haynes: }e-mail: akh502@york.ac.uk

\vspace{3mm}

\noindent Sanju L. Velani: Department of Mathematics, University of
York,

\vspace{0mm}

\noindent\phantom{Sanju L. Velani: }Heslington, York, YO10 5DD,
England.

%\vspace{0mm}

\noindent\phantom{Sanju L. Velani: }e-mail: slv3@york.ac.uk

\end{document}